 \numberwithin{equation}{section}
\definecolor{darkgreen}{rgb}{0,0.45,0} 
\definecolor{lightgrey}{rgb}{0.666666,0.666666,0.666666}
\renewcommand{\epsilon}{\varepsilon}
\renewcommand{\phi}{\varphi}
\newcommand{\ca}{\ensuremath{\mathcal A}\xspace}
\newcommand{\cb}{\ensuremath{\mathcal B}\xspace}
\newcommand{\cc}{\ensuremath{\mathcal C}\xspace}
\newcommand{\cl}{\ensuremath{\mathcal L}\xspace}
\newcommand{\cn}{\ensuremath{\mathcal N}\xspace}
\newcommand{\crr}{\ensuremath{\mathcal R}\xspace}
\newcommand{\ct}{\ensuremath{\mathcal T}\xspace}
\newcommand{\bba}{\ensuremath{\mathbb A}\xspace}
\newcommand{\bbb}{\ensuremath{\mathbb B}\xspace}
\newcommand{\bbc}{\ensuremath{\mathbb C}\xspace}
\newcommand{\bbn}{\ensuremath{\mathbb N}\xspace}
\newcommand{\tmop}{\cn}
\newcommand{\Mat}{\ensuremath{\mathbf{Mat}}\xspace}
\newcommand{\Cat}{\ensuremath{\mathbf{Cat}}\xspace}
\newcommand{\Set}{\ensuremath{\mathbf{Set}}\xspace}
\newcommand{\co}{\ensuremath{^{\textnormal{co}}}}
\newcommand{\op}{\ensuremath{^{\textnormal{op}}}}
\newcommand{\TMult}{\ensuremath{\mathbf{T\textnormal{-}Mult}}\xspace}
\newcommand{\Skew}{\ensuremath{\mathbf{Skew_\ell}}\xspace}
\newcommand{\SkewC}{\ensuremath{\mathbf{SkewCl_\ell}}\xspace}
\newcommand{\SkewMC}{\ensuremath{\mathbf{SkewMCl_\ell}}\xspace}
\newcommand{\RMult}{\ensuremath{\mathbf{R\textnormal{-}Mult}}\xspace}
\newcommand{\Mult}{\ensuremath{\mathbf{Mult}}\xspace}
\newcommand{\TAlgs}{\ensuremath{\mathbf{T}\textnormal{-}\mathbf{Alg}_{s}}\xspace}
\newcommand{\TAlg}{\ensuremath{\mathbf{T}\textnormal{-}\mathbf{Alg}}\xspace}
\newcommand{\LBCAlgc}{\ensuremath{\mathbf{LBC\textnormal{-}Alg}}\xspace}
\newcommand{\LBC}{\textnormal{LBC}}
\newcommand{\nc}{\ensuremath{\mathbf{nColax\textnormal{-}T^{*}\textnormal{-}Alg}}\xspace}
\newcommand{\ncnotdual}{\ensuremath{\mathbf{nColax\textnormal{-}T\textnormal{-}Alg}}\xspace}
\newcommand{\ncl}{\ensuremath{\mathbf{nColax\textnormal{-}L\textnormal{-}Alg}}\xspace}
\newcommand{\myt}{t}
\newcommand{\myl}{\ell}
\newcommand{\mylam}{\lambda}
\DeclareMathOperator{\End}{End}
\newcommand{\two}{\ensuremath{\mathbf{2}\xspace}}
\newcommand{\ox}{\otimes}
\newcommand{\x}{\times}
\def\1c#1{\stackrel{#1}{\to}}
  \newtheorem{proposition}{Proposition}[section]
  \newtheorem{theorem}[proposition]{Theorem}
  \theoremstyle{definition}
  \newtheorem{definition}[proposition]{Definition}
  \newtheorem{notation}[proposition]{Notation}
  \newtheorem{example}[proposition]{Example}
  \newtheorem{examples}[proposition]{Examples}
  \newtheorem{AlternativeViewpoint}{Alternative perspective}
  \theoremstyle{remark}
  \newtheorem{remark}[proposition]{Remark}
  \newcounter{c}
  \renewcommand{\[}{\setcounter{c}{1}$$}
  \newcommand{\etyk}[1]{\vspace{-7.4mm}$$\begin{equation}\Label{#1}
  \addtocounter{c}{1}}
  \renewcommand{\]}{\ifnum \value{c}=1 $$\else \end{equation}\fi}
\begin{document}

 \title{Skew monoidal categories and skew multicategories}

\author{John Bourke, Stephen Lack}
\address{Department of Mathematics, Macquarie University NSW 2109, Australia}
\email{steve.lack@mq.edu.au}

\date{\today}
%\subjclass{}

\begin{abstract}
We describe a perfect correspondence between skew monoidal categories and certain generalised multicategories, called skew multicategories, that arise in nature.
\end{abstract} 
\date\today
\maketitle

%%%%%%%%%%%%%%%%%%%%%%%%%%%%%%%  INTRODUCTION  %%%%%%%%%%%%%%%%%%%%%%%%%%%%%%

\section{Introduction}

In linear algebra one meets the concept of a bilinear map $A\x B\to
D$, and later learns that these are in bijection with linear maps
$A\ox B\to D$, so that one might say that bilinear maps are
``classified'' by the tensor product $A\ox B$. Similarly there are
trilinear maps $A\x B\x C\to D$ and these are classified by the tensor
product $A\ox B\ox C$, which can be constructed out of the binary
construction as either of the (isomorphic) objects  $(A\ox B)\ox
C$ and $A\ox (B\ox C)$. The most important
properties of the tensor product, including coherence, all follow from the universal
property, without the need for an explicit construction of the tensor product.

In many other examples of monoidal structures which are not given by a
categorical (cartesian) product, it is likewise the case that maps out
of a tensor product $A\ox B$ can alternatively be described in terms
of some more primitive notion, corresponding to bilinearity. 

This idea of multilinear morphisms can be abstracted in the notion of
{\em multicategory} \cite{Lambek-multicategories}, and in such a
multicategory one can then ask whether or not there is a suitable corresponding
tensor product, in which case the multicategory is said to be {\em
  representable}.  In this case the corresponding tensor product forms
  part of a monoidal structure, and indeed the notion of a representable
   multicategory is equivalent to that of a monoidal category.
  
  %This can provide a useful context for coherence
%issues, as forcefully argued in \cite{Hermida-multicats}.

%A multicategory consists of objects, and a notion of multimorphisms,
%together with associative and unital compositions (sometimes better
%thought of as substitutions). There is a variant, known as a {\em
%  substitude} \cite{DayStreet-substitude}, which starts not just with
%objects but with a category, and then introduces $n$-ary multimorphisms for
%any $n$. This now gives the original notion of morphism as well as
%$1$-ary morphisms, and so two different category structures. When they
%agree, the substitude is said to be normal, and normal substitudes are
%really just the same thing as multicategories. 

There are various intermediate structures between monoidal categories
and multicategories, including the notion of {\em colax monoidal
  category}.  Colax monoidal structure on a category \ca is the
same as lax monoidal structure on $\ca\op$; it involves an $n$-ary
product $\ca^n\to \ca$ for each $n$. The colax part of the structure
 consists of  maps 
\[ a_1a_2\ldots a_n \to (a_1\ldots a_i)(a_{i+1}\ldots
  a_j)\ldots(a_{k+1}\ldots a_n) \]
going {\em into} a product of products, from the expanded product;
these are of course subject to various conditions. The case $n=0$ of
the product amounts to an object $i\in \ca$, and part of the colax
structure involves maps $a\to ia$ and $a\to ai$. 
 For the connection between colax monoidal categories and
multicategories, see \cite{DayStreet-LaxMonoids} or \cite{Aguiar2017Monads}.
For applications of lax monoidal structure to higher categories, see 
\cite{Leinster-book} and \cite{BataninWeber-EnrichedI}. 

Recently, the notion of {\em skew monoidal category} has received a
lot of attention, due to Szlach\'anyi's brilliant insight
\cite{Szlachanyi-skew} that they
can be used to describe bialgebroids. A skew monoidal category is a
category $\ca$ equipped with a binary product $\ca^2\to \ca$ and a unit
object $i$, together with natural transformations 
\[ \xymatrix @R0pc { 
(ab)c \ar[r]^{\alpha} & a(bc) \\
ia \ar[r]^{\lambda} & a \\
a \ar[r]^{\rho} & ai }\]
subject to five axioms corresponding to Mac~Lane's five axioms for a
monoidal category \cite{Maclane-monoidal}.   The theory of skew monoidal categories 
has been developed in a series of papers \cite{skew,skewcoherence,skewEH,skew-reflection,mw} 
by the second named author in collaboration with Street.% the motivation for the present work draws heavily on the paper
%\cite{bourko-skew} by the other of us, and the homotopical and
%2-categorical examples contained there. 
%

In \cite{bourko-skew} the first named author described examples of skew 
monoidal categories arising from a rather different source -- 2-category theory --
and used these to give efficient constructions
of various more complex monoidal bicategories.  In these 2-categorical examples
 the skew structures were shown to arise from multicategories 
with two types of multimorphism -- one stricter, one weaker. 

The main goal of the present paper is to describe a perfect correspondence
between skew monoidal categories and a kind of multicategory with two
types of multimorphism; unsurprisingly, we call these \emph{skew multicategories}.

The starting point is the fact that, for a skew monoidal category $\ca$,
the functor $\ca\to \ca$ given by tensoring on the left with the unit
object $i$ is a comonad (with counit $\lambda$). The category $\ca$
therefore comes equipped with a notion of ``weak morphism'' from $a$
to $b$, consisting of a morphism in the usual sense from $ia$ to
$b$. These can be composed as in the Kleisli category of the comonad.
But there are also ``weak multimorphisms'', classified by products
such as $(ia)b$ or $((ia)b)c$, as well as a stricter sort of multimap
classified by products such $ab$ or $(ab)c$.

It turns out that the resulting notion of skew multicategory is
controlled by a particular (non-symmetric) \Cat-enriched operad $\crr$,
and it is convenient to develop the general notion of
$\ct$-multicategory for a \Cat-operad $\ct$; then skew multicategories
are $\crr$-multicategories, and ordinary multicategories are
\tmop-multicategories, where \tmop is the terminal \Cat-operad.  
These $\ct$-multicategories can be described using existing
notions of generalised multicategory, like that of \cite{CruttwellShulman}.
The various approaches are described in more detail in Section~\ref{sect:Tmult}.

Just as for ordinary monoidal categories and multicategories, various
aspects of skew monoidal categories can most easily be seen from the
point of view of skew multicategories. In a future paper
\cite{skewsymmetric}, we shall study braidings and symmetries for skew
monoidal categories. If one thinks of these as extra structure
borne by the tensor product, one might be led to write down an overly 
simplistic definition, but from the multicategorical point of view
things become far clearer. 

 We now outline the structure of the current paper.  
After a brief review of multicategories and operads, we begin with the
notion of $\ct$-multicategory in Section~\ref{sect:Tmult} and discuss
representability in this context.  In Section~\ref{sect:Skewmult} we
define skew multicategories as $\crr$-multicategories for a
$\Cat$-operad $\crr$, giving examples and defining left representable
skew multicategories.  
 In Section~\ref{sect:Talg} we define  colax
$\ct$-algebras, relate  them to $\ct$-multicategories,
 and identify  
those corresponding to left representable skew multicategories.  

Using  this, as well as results from our companion paper
\cite{Fsk2},  we describe in Section~\ref{sect:Skewmon} 
the perfect correspondence between skew monoidal categories and left
representable skew multicategories.   In Section~\ref{sect:Skewmon} we also
describe variants of this correspondence, dealing with closed skew monoidal categories and skew closed categories.

\subsection*{Acknowledgements}

Both authors acknowledge with gratitude the support of an Australian Research Council
Discovery Grant DP130101969; Lack further acknowledges the support of a Future Fellowship FT110100385.

\section{Review of multicategories and operads}\label{sect:review}

In this section we briefly review the definitions, establishing our
terminology along the way. 

In this paper our operads will always be of the ``plain'' variety,
without actions of the symmetric groups. On the other hand, they will
usually be enriched over \Cat. Such plain \Cat-enriched operads can be
equivalently be described as clubs over $\bbn$ \cite{Kelly:clubs}.

\subsection{Multicategories}

A {\em multicategory} \bba consists of:
\begin{itemize}
\item a collection of objects
\item for each (possibly empty) list $a_1,\ldots,a_n$ of objects and each object
  $b$, a set $\bba(a_1,\ldots,a_n;b)$; sometimes we write
  $\overline{a}$ for the list, and then
  $\bba(\overline{a};b)$ for the set
\item for each object $a$, an element $1_a\in \bba(a;a)$
\item substitution operations
\[ \bba(b_1,\ldots,b_n;c)\x \prod\limits^n_{i=1}
\bba(\overline{a}_i;b_i) \to 
%  \bba(a_{i,1},\ldots,a_{i,k_i};b_i) \to 
\bba(\overline{a}_1,\ldots,\overline{a}_n;c)
%\bba(a_{1,1},\ldots,a_{n,k_n};c) 
\]
where each $\overline{a}_i$ is itself a list $a_{i1},\ldots,a_{ik_i}$,
and where in the codomain these lists have been concatenated to obtain
a list $a_{11},\ldots,a_{nk_n}$. 
\end{itemize}
The notation for substitution is 
$(g,f_1,\ldots,f_n) \mapsto g(f_1,\ldots,f_n)$.
We require the evident associativity conditions as well as
the identity laws $1_c(g)=g=g(1_{b_1},\ldots,1_{b_n})$.

\begin{remark}
Given $g \in \bba(b_1,\ldots,b_n;c)$ and $f\colon\overline{a} \to b_{i}$ we sometimes write $g \circ_{i} f$ for the multimap $g(1,\ldots,1,f,1,\ldots,1)$ 
which captures \emph{substitution in position $i$.} In the present paper we will use $g \circ_{i} f$ only to simplify notation but note that the notion of multicategory can be formulated with these operations as the primitive ones (see \cite{Markl-OperadsPROPs} for the operad case). 

%For many of the concrete calculations, it is convenient to use an
%alternative approach to multicategories (see \cite{Markl-OperadsPROPs}
%for the operad case), where only one multimorphism
%at a time is substituted. According to this approach, substitution
%takes the form 
%\[ \xymatrix @R0pc {
%\bba(\overline{b},b_j,\overline{b'};c) \x \bba(\overline{a};b_j)
%\ar[r]^-{\circ_j} & \bba(\overline{b},\overline{a},\overline{b'};c) \\
%(g,f) \ar@{|->}[r] & g\circ_j f } \]
%for some $j\ge0$, where $\overline{b}$ is a list of length $j-1$ and
%$\overline{a}$ and $\overline{b'}$ are possibly empty lists; thus
%$f$ is being substituted into the $j$th input. 

\end{remark}

In a {\em \Cat-enriched multicategory} each
$\bba(a_1,\ldots,a_n;b)$ is now a category, and each of the
substitution operations a functor. The equations are required to hold
strictly.

\subsection{Operads}

When a multicategory has only one object, there is no need to keep
track of the objects $a_1,\ldots,a_n,b$ in $\bba(a_1,\ldots,a_n;b)$
and so one may simply write $\bba_n$, except that we typically use
names like $\ct$ rather than $\bba$ for an operad, and so we would
write $\ct_n$ rather than $\bba_n$.  There is then a unit $e\in \ct_1$;
and substitution operations $\ct_n\x \ct_{k_1}\x\ldots\x \ct_{k_n}\to
\ct_{k_1+\ldots+k_n}$.

We are generally interested in the \Cat-enriched case, in which we
shall speak of a \Cat-operad.   Each $\Cat$-operad $\ct$ has a dual $\ct^*$ with
$\ct^*_n=\ct\op_n$, and with
the corresponding multiplication and unit. (From the point of view of
\Cat-enriched multicategories, one might think of this as ``$\ct\co$'',
obtained by reversing the 2-cells but not the 1-cells.)

\section{$\ct$-multicategories}\label{sect:Tmult}

{ Let $\ct$ be a fixed \Cat-operad; recall that in this paper our operads
do not involve actions of the symmetric groups. }

\begin{definition}
A {\em $\ct$-multicategory} \bba consists of a set $A$ of objects,
together with 
\begin{itemize}
\item for each list $a_1,\ldots,a_n\in A$ and each $b\in A$, a functor 
\[ \bba(a_1,\ldots,a_n;b)\colon \ct_n\to\Set \]
whose value at an object $x\in \ct_n$ we write as
$\bba_x(a_1,\ldots,a_n;b)$
or sometimes $\bba_x(\overline{a};b)$, where $\overline{a}$ stands for
the list $a_1,\ldots,a_n$;
\item for each $a\in A$ an element $1_a\in \bba_e(a;a)$ called the
  identity;
\item substitution maps
\[ \xymatrix @R0pc {
\bba_{x}(b_1,\ldots,b_n;c)\x \prod\limits^n_{i=1} 
\bba_{x_{i}}(\overline{a}_i;b_i) \ar[r] & \bba_{x(x_{1},\ldots,x_{n})}(\overline{a}_1,\ldots,\overline{a}_n;c) \\
(g,f_{1},\ldots f_{n}) \ar@{|->}[r] & g(f_{1},\ldots,f_{n})
 } \]
natural in $x,x_1,\ldots,x_n$
\end{itemize}
satisfying the associativity and identity axioms which are the
natural ``$\ct$-typed'' analogues of those for ordinary multicategories. 
\end{definition}

\begin{example}
  If $\ct$ is the terminal \Cat-operad \tmop with $\tmop_n=1$ for all $n$, a
  $\ct$-multicategory is just an ordinary multicategory. 
\end{example}

\begin{remark}
In the $\ct$-multicategory context we also  sometimes write $g \circ_{i} f$ for the multimap $g(1,\ldots,1,f,1,\ldots,1)$.  And as in the ordinary setting, $\ct$-multicategories admit a formulation taking such operations as primitive.

\end{remark}

Just as for ordinary multicategories, every
$\ct$-multicategory $\bba$ has an associated category $\ca$ with the same objects,
with homs given  by $\ca(a,b)=\bba_e(a;b)$, and with composition given by substitution.

\begin{proposition}\label{prop:categoryA}
For each $n$, the functor
\[  \bba_{-}(-;-)\colon\ct_n\x (A^n)\op\x A\to \Set \]
%\[ (x,a_1,\ldots,a_n,b) \mapsto \bba_x(a_1,\ldots,a_n;b) \]
extends to a functor 
\[  \bba_{-}(-;-)\colon\ct_n\x (\ca^n)\op\x \ca\to \Set. \]
These extensions are uniquely determined by the following properties:
\begin{enumerate}
\item $\bba_{e}(-,-)=\ca(-,-)\colon\ca^{op} \times \ca \to \Set$;
\item  the substitution maps
\[ \bba_{x}(b_1,\ldots,b_n;c)\x \prod\limits^n_{i=1}
\bba_{x_{i}}(\overline{a}_i;b_i) \to 
%  \bba(a_{i,1},\ldots,a_{i,k_i};b_i) \to 
\bba_{x(x_{1},\ldots,x_{n})}(\overline{a}_1,\ldots,\overline{a}_n;c)
%\bba(a_{1,1},\ldots,a_{n,k_n};c) 
\]
are natural in each $\overline{a}_{i}$, $b_i$ and $c$ as well as the variables $x,x_i$.
\end{enumerate}
\end{proposition}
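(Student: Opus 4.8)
The plan is to define the action of morphisms of $\ca$ on the multimorphism sets by \emph{substitution}, exploiting the observation that a morphism of $\ca$ from $a'$ to $a$ is by definition an element of $\ca(a',a)=\bba_e(a';a)$, i.e.\ a unary multimap of type $e$. Concretely, for the covariant output variable, a morphism $h\in\ca(b,c)=\bba_e(b;c)$ acts on $f\in\bba_x(\overline{a};b)$ by $h_*(f):=h(f)$; for the $i$-th (contravariant) input variable, a morphism $g\in\ca(a_i',a_i)=\bba_e(a_i';a_i)$ acts by $g^*(f):=f\circ_i g=f(1_{a_1},\ldots,g,\ldots,1_{a_n})$. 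The one point to check here is that the types come out right: a priori $h(f)$ lies in $\bba_{e(x)}(\overline{a};c)$ and $f\circ_i g$ in $\bba_{x(e,\ldots,e)}(\overline{a};b)$, but the strict unit laws $e(x)=x$ and $x(e,\ldots,e)=x$ of the \Cat-operad $\ct$ identify these with $\bba_x$, so the actions stay within the fixed fibre over $x$.

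For existence, that is, that these data assemble into a functor on $\ct_n\x(\ca^n)\op\x\ca$, I would verify each clause directly from the $\ct$-multicategory axioms. The identity laws $f\circ_i 1_{a_i}=f$ and $1_b(f)=f$ say identities act as identities; associativity gives $(f\circ_i g)\circ_i g'=f\circ_i(g\circ g')$ and $h'(h(f))=(h'\circ h)(f)$, so composites act as composites, making each variable separately functorial. The remaining content is \emph{interchange}: actions in two distinct input slots commute, and the input and output actions commute with each other and with substitution; all of these are again instances of associativity. Together with the naturality of substitution in $x,x_1,\ldots,x_n$ already built into the definition of a $\ct$-multicategory, this shows the separately functorial assignments cohere into a single functor on the product category.

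For uniqueness, suppose given any extension satisfying (1) and (2). Property (1) forces morphisms of $\ca$ to be exactly the $e$-typed unary multimaps, and property (2), the naturality of substitution in the object variables, then pins down the actions. For an input morphism $g\in\ca(a_i',a_i)$, naturality of the substitution map in the $i$-th inner domain object, applied to $g$, together with the identity law $f(1,\ldots,1)=f$, forces the contravariant action to equal $f\mapsto f\circ_i g$; dually, naturality in the output object together with $1_b(f)=f$ forces the covariant action to equal $f\mapsto h(f)$. Since these agree with the construction above, the extension is unique.

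The main obstacle is not conceptual but bookkeeping: one must keep the operad unit laws visible at every step so that all substitutions land in the intended fibre $\bba_x$, and one must check the full slate of interchange identities among the input actions, the output action, and the $\ct_n$-variable, rather than merely separate functoriality in each. Everything reduces to the associativity and identity axioms of the $\ct$-multicategory, so no genuinely new ingredient is required.
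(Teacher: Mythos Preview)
Your proposal is correct and follows exactly the same approach as the paper: define the covariant action by post-substitution with $h\in\bba_e(b;c)$ and the contravariant actions by pre-substitution with $g_i\in\bba_e(a'_i;a_i)$, with uniqueness forced by conditions (1) and (2). The paper's proof is considerably terser, recording only the two substitution maps that give these actions and leaving the routine verifications implicit, whereas you spell out the type-checking via the operad unit laws and the interchange/functoriality checks.
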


\proof
The requirements force us to define $\bba_{x}(a_{1},\ldots,a_{n};-)$ using
the substitution map
\[ \xymatrix @R0pc {
\bba_{e}(b;c)\x  
\bba_{x}(a_{1},\ldots,a_{n};b) \ar[r] & \bba_{x}(a_{1},\ldots,a_{n};c) \\
 } \]
 and in the other variables using the substitution map
 \[ \xymatrix @R0pc {
\bba_{x}(b_1,\ldots,b_n;c)\x \prod\limits^n_{i=1} 
\bba_{e}(a_{i};b_i) \ar[r] & \bba_{x}(a_{1},\ldots,a_n;c) \\
 }. \qedhere\]
%\endproof

There is a straightforward way to adapt the notion of morphism of
multicategories and 2-cell to our $\ct$-dependent context. A morphism of
$\ct$-multicategories $\bba\to\bbb$ involves an assignment $a\mapsto Fa$
on objects, together with maps 
\[ F\colon \bba_x(a_1,\ldots,a_n;b)\to \bbb_x(Fa_1,\ldots,Fa_n;Fb) \]
which preserve substitution and identities in the obvious sense. 

\begin{notation}
  When we wish to apply a map $F$ to each element of a list
  $\overline{a}$, we write $F\overline{a}$. Thus the maps displayed
  above could be written as 
\[ F\colon \bba_x(\overline{a};b) \to \bbb_x(F\overline{a};Fb). \]
\end{notation}
Given two such morphisms $F$ and $G$, a 2-cell $\phi\colon F\to G$ involves a
morphism $\phi_a\colon Fa\to Ga$ in $\bbb_e$ for each $a\in\bba$,
subject to the naturality condition asserting that the squares 
\[ \xymatrix @C5pc {
\bba_x(\overline{a};b) \ar[r]^{F} \ar[d]_{G} &
\bbb_x(F\overline{a};Fb) \ar[d]^{\bbb_x(F\overline{a};\phi_b)}
\\
\bba_x(G\overline{a};Gb)
\ar[r]_{\bba_x(\phi_{a_1},\ldots,\phi_{a_n};Gb)} & 
\bbb_x(F\overline{a};Gb) } \]
commute.

$\ct$-multicategories, their morphisms, and their  2-cells together form
a 2-category \TMult.

\begin{definition}\label{defn:weaklyrep}
  A $\ct$-multicategory is {\em weakly representable} when each of the
  functors $\bba_x(a_1,\ldots,a_n;-)\colon \ca \to\Set$ is
  representable. 
\end{definition}
Explicitly, this means that for each $x\in \ct_n$ and each
$a_1,\ldots,a_n\in A$ there exists an object $m_{x}(a_{1},\ldots,a_{n}) \in A$ and multimap $$\theta_{x}(\overline{a}) \in \mathbb  A_{x}(\overline{a};m_{x}\overline{a})$$ with the property that the induced function
$$- \circ_{1} \theta_{x}(\overline{a})\colon\mathbb A_{e}(m_{x}\overline{a};b) \to A_{x}(\overline{a};b)$$ is a
bijection for all $b \in A$. We sometimes call $\theta_x(\overline{a})$ a {\em
  universal multimap} of type $x$. 

\begin{remark}
In the case that $\ct$ is the terminal $\Cat$-operad  there is a single object 
$m(a_1,\ldots,a_n)$ for each $n$. A weakly representable multicategory is {\em
  representable} \cite{Hermida2000Representable} when substitution with $\theta(\overline{a})$ 
induces bijections 
\[ \bba(\overline{b},m\overline{a},\overline{c};d)\to 
\bba(\overline{b},\overline{a},\overline{c};d). \] 
It is possible to make a corresponding definition of
representable $\ct$-multicategory for general $\ct$; we do not do so,
since in the case $\ct=\crr$ corresponding to skew multicategories this is
not the representability condition which captures the notion of skew
monoidal category. In fact for general $\ct$, the only representability
condition we consider is that of weak representability. 
\end{remark}

Returning to the case of a general $\ct$ and a weakly representable
$\ct$-multicategory \bba, by 
Proposition~\ref{prop:categoryA} we have functors 
\[ \bba_{-}(-,\ldots,-;-) \colon \ct_n \x (\ca^n)\op \x \ca  \to \Set \]
which are representable in the last variable, so in the usual way
there is an induced functor 
\[ \ct\op_n \x \ca^n \to \ca, \]
whose action on objects we may write as $(x,a_1,\ldots,a_n)\mapsto
m_x(a_1,\ldots,a_n)$, together with bijections 
\[ \ca(m_x(a_1,\ldots,a_n),b) \cong \bba_x(a_1,\ldots,a_n;b) \] 
natural in all variables $x,a_1,\ldots,a_n,b$. 

These functors $\ct\op_n\x \ca^n\to \ca$ can in turn be thought of as functors 
\[ m \colon \ct\op_n\to [\ca^n,\ca] \]
where $[\ca^n,\ca]$ represents the functor category. 
The category \ca together with the functors $m$
capture much of the structure of the $\ct$-multicategory: the set of
objects, the various multihoms, the identities, and a few special
cases of the substitutions. We shall see in Section~\ref{sect:Talg}
that the  
remaining structure can be understood in terms of colax algebras.

\begin{proposition}\label{prop:unit}
  For a weakly representable $\ct$-multicategory \bba, it is possible to
  choose the corresponding functor $m\colon \ct\op_1\to [\ca,\ca]$ to
  send the unit $e\in \ct\op_1$ to the identity functor. 
\end{proposition}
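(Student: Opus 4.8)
The plan is to exhibit an explicit choice of representing data at the unit for which the induced endofunctor is visibly the identity, exploiting the fact that at $x=e$ the functor whose representability is being asserted is already a representable of the most obvious kind. Recall from Proposition~\ref{prop:categoryA} that $\bba_e(-;-)=\ca(-,-)$, so for a single object $a$ the functor $\bba_e(a;-)\colon\ca\to\Set$ is nothing but the representable functor $\ca(a,-)$. This functor is canonically represented by $a$ itself, with universal element the identity $1_a\in\ca(a,a)=\bba_e(a;a)$. Accordingly, my first step is to make the choices $m_e(a):=a$ and $\theta_e(a):=1_a$ for every object $a$.

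Next I would check that these are legitimate choices of weakly representing data in the sense of Definition~\ref{defn:weaklyrep}: the induced function $-\circ_1\theta_e(a)\colon\bba_e(a;b)\to\bba_e(a;b)$ is $h\mapsto h(1_a)$, which is the identity by the multicategory identity law and hence a bijection. Since the representing object and universal multimap may be chosen independently for each pair $(x,\overline{a})$ — the functoriality of $m\colon\ct\op_1\to[\ca,\ca]$ in the variable $x$ (and in $a$) being then forced by naturality of the representing bijections — fixing this particular choice at $x=e$ places no constraint on, and is placed under no constraint by, the choices at the remaining $x\in\ct_1$. Thus $m(e)=m_e$ is a well-defined endofunctor of $\ca$ which by construction agrees with the identity on objects.

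It remains to see that $m_e$ is the identity on morphisms as well, and this is the only point requiring genuine argument (a Yoneda-style computation). For $f\colon a\to a'$ in $\ca$, naturality of the bijection $\ca(m_e a,b)\cong\bba_e(a;b)$ in the variable $a$ yields a commuting square whose two vertical legs are, by the chosen universal elements, the identity maps of $\ca(a,b)=\bba_e(a;b)$ and $\ca(a',b)=\bba_e(a';b)$; the lower horizontal leg $\bba_e(f;b)$ is substitution with $f$ and the upper is precomposition with $m_e(f)$. Evaluating the resulting equality of maps $\ca(a',b)\to\ca(a,b)$ at $b=a'$ and at $g=1_{a'}$ gives $m_e(f)=f$. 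Hence $m_e=\id_\ca$, as required. I expect the verification of this last naturality step to be the main (though minor) obstacle; everything else is a direct reading-off of the definitions.
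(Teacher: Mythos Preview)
Your argument is correct and is essentially the paper's own proof, spelt out in full: both observe that $\bba_e(a;-)=\ca(a,-)$ is canonically represented by $a$ itself, so one may choose $m_e(a)=a$ with universal element $1_a$. The paper compresses everything into a single sentence (``so clearly we may take it to be the identity''), whereas you additionally verify explicitly that the induced action on morphisms is the identity --- a welcome but not strictly necessary unpacking of the Yoneda step the paper leaves implicit.
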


\proof
The functor $m_e\colon \ca\to\ca$ is characterized by the fact that
it provides representing objects for the Yoneda embedding
$\ca\to[\ca\op,\Set]$, in the sense that there are natural
isomorphisms $\ca(m_{e}(a),b)\cong \ca(a,b)$, so clearly we may take it
to be the identity. 
\endproof

There are various other perspectives on $\ct$-multicategories, not
needed for this paper, but which may help to shed light on the
concept.   We begin with the simplified case of a $\Set$-operad by way of motivation.

\begin{AlternativeViewpoint}
If $\ct$ is a $\Set$-operad we can view it as a $\Cat$-operad
in which each category $\ct_n$ is discrete.  Then a $\ct$-multicategory
amounts to an ordinary multicategory equipped with a multifunctor into $\ct$,
itself viewed as a one object multicategory.

The $\Set$-operad gives rise to a cartesian monad on $\Set$;
one can then define generalised multicategories relative to this cartesian monad -- for instance, see \cite{Leinster-book} -- and these coincide with our $\ct$-multicategories.

Furthermore the monad on $\Set$ extends to a monad $\mathbb \ct$ on the pseudo-double
category of spans, and a $\ct$-multicategory in our sense is then the same as a $\mathbb \ct$-monoid
in the sense of \cite{CruttwellShulman}.
\end{AlternativeViewpoint}

\begin{AlternativeViewpoint}
 Now suppose that $\ct$ is a general \Cat-operad. 
If $\bba$ is a $\ct$-multicategory, there is an associated \Cat-enriched
multicategory $\overline{\bba}$ with the same objects. The multihom
$\overline{\bba}(a_1,\ldots,a_n;b)$ is the category of elements of the
functor $\bba(a_1,\ldots,a_n;b)\colon \ct_n\to\Set$. The substitutions
for $\bba$ induce the necessary substitutions for
$\overline{\bba}$. The projections
$\overline{\bba}(a_1,\ldots,a_n;b)\to \ct_n$ are of course discrete
opfibrations, and they define a \Cat-enriched
multifunctor from $\overline{\bba}$. 

This provides an alternative characterization of $\ct$-multicategories,
as the \Cat-enriched multicategories  in the usual sense, equipped with a \Cat-enriched
multifunctor into $\ct$  which is locally a discrete opfibration. 
Here
when we speak of ``multifunctor into $\ct$'', we are again regarding
the operad $\ct$ as a one-object multicategory. This
in turn makes it clear that one could define $\ct$-multicategories for
any \Cat-enriched multicategory  $\ct$, not just an operad. 

A $\Cat$-operad $\ct$ gives rise to a 2-monad on $\Cat$ which extends to a monad $\mathbb T$ on the pseudo-double category of categories and profunctors; now a $\ct$-multicategory in our sense is the
 same as a $\mathbb T^{ *}$-monoid \cite{CruttwellShulman} with discrete underlying category, where $\ct^{*}$ is the dual operad.
 \end{AlternativeViewpoint}

\begin{AlternativeViewpoint}
There is yet another possible characterization, which is relevant to
what follows: a colax $\ct$-algebra in the monoidal bicategory
$\Mat$ of \Set-valued matrices. An object of $\Mat$ is a set $A$. A
morphism from $A$ to $B$ is an $(A\x B)$-indexed family of sets, and a
2-cell is an $(A\x B)$-indexed family of functions. Morphisms are composed using
the usual formula for matrix multiplication.   

For each set $A$ there is a (pseudo) \Cat-enriched operad $\End(A)$,
with $\End(A)_{ n}=\Mat(A^n,A)$,
and a colax $\ct$-algebra is a colax morphism of operads from $\ct$ to
$\End(A)$.  This involves a functor $\ct_n\to \Mat(A^n,A)$ for each $n$,
sending $x\in \ct_n$ to the family $\bba_x$ whose component indexed by
$(a_1,\ldots,a_n)$ and $b$ is $\bba_x(a_1,\ldots,a_n;b)$. The colax
structure is given by the substitution and identity maps.  
\end{AlternativeViewpoint}

\section{Skew multicategories}\label{sect:Skewmult}

In this section we specialize to a particular \Cat-operad $\crr$, and
define skew multicategories to be $\crr$-multicategories. It turns out
that the (strict) $\crr$-algebras are precisely the skew monoidal categories for
which the associativity maps $\alpha$ and the left unit maps $\lambda$
are identities, although we shall never need to use this fact.

We define $\crr$ explicitly as follows
\[ \crr_n =
\begin{cases}
  \{\ell\} & \text{if $n=0$} \\
  \{\mylam\colon t\to\ell\} & \text{otherwise}
\end{cases}
\]
so that abstractly $\crr_n$ is the arrow category $\two$ for $n>0$ and
$\crr_0$ is the terminal category $1$.

The multiplication %$R(n)\x R(k)\to R(n+k-1)$ are defined by  
%\[ x\circ_j y =
%\begin{cases}
%  \myt & \text{if $x=\myt$ and $y=\myt$ or $j\neq1$} \\
%  \myl & \text{otherwise.}
%\end{cases}
 $\crr_n\x \crr_{k_1}\x\ldots\x \crr_{k_n}\to \crr_{k_1+\ldots+k_n}$ is defined by 
 \[ x(x_1\ldots,x_n) =
 \begin{cases}
   \myt & \text{if  $x=x_1=\myt$} \\
   \myl & \text{otherwise}
 \end{cases}
\]
and the unit by $\myt\in \crr_1$.  

\begin{remark}
For an object or morphism  $x$ in
$\{\lambda \colon \myt\to\myl\}$ we sometimes write $x_n$ when we are thinking of it as
lying in $\crr_n$.  
\end{remark}

\begin{definition}
  A {\em skew multicategory} is an $\crr$-multicategory. 
\end{definition}

Let us unpack the definition.  To begin with, a skew multicategory involves
\begin{itemize}
\item a set of objects $A$
\item for each $a\in A$ a set $\bba_\myl(~;a)$ of \emph{nullary maps}
\item for each $n>0$, each $a_1,\ldots,a_n\in A$, and each $b\in A$ a
  function
  \begin{equation}\label{eq:comparison}
j_{\overline{a},b}\colon \bba_\myt(a_1,\ldots,a_n;b)\to \bba_\myl(a_1,\ldots,a_n;b) \hspace{0.1cm} .
\end{equation}
\end{itemize}
We sometimes refer to the elements of $\mathbb
A_{t}(a_{1},\ldots,a_{n};b)$ as \emph{tight} $n$-ary multimaps, and to
the elements of $\bba_{\myl}(a_{1},\ldots,a_{n};b)$ as \emph{loose} $n$-ary multimaps.  The functions \eqref{eq:comparison} then allow us to view each tight multimap as a loose multimap. 
\newline{}
On top of this there is further structure: 
\begin{itemize}
\item for each $a\in A$ there is a tight multimap $1_a\in \bba_\myt(a;a)$;
\item  substitution gives us multimaps $g(f_{1},\ldots,f_{n})$,
  which are tight just when $g$ and $f_{1}$ are;  these substitutions,
  moreover, commute with the comparisons viewing tight multimaps as
  loose.  

\end{itemize}
Finally the usual associativity and unit axioms must be satisfied.

In many of our leading examples of skew multicategories the functions \eqref{eq:comparison} are subset inclusions.  In that case we can view skew multicategories as ordinary multicategories equipped with a distinguished class of \emph{tight multimaps}, as we now record.

\begin{proposition}\label{prop:subset}
There is a bijection between 
\begin{enumerate}
\item Skew multicategories  $\mathbb A$ in which each $$j_{\overline{a},b}\colon\mathbb A_{\myt}(a_{1},\ldots,a_{n};b) \to \mathbb A_{\myl}(a_{1},\ldots,a_{n};b)$$ is a \emph{subset inclusion}, and
\item Multicategories $\mathbb A$ together with for each $n>0$
  specified subsets $$\mathbb A_{\myt}(a_{1},\ldots,a_{n};b) \subseteq
  \mathbb A(a_{1},\ldots,a_{n};b)$$ of ``tight" maps containing the
  identities and having the property that a composite multimap
  $g(f_{1},\ldots,f_{n})$ is tight whenever both $g$ and $f_{1}$ are tight.
\end{enumerate}
\end{proposition}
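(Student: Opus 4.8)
The plan is to unwind the definition of a skew multicategory as an $\crr$-multicategory and to observe that the subset-inclusion hypothesis collapses almost all of the structure onto an ordinary multicategory. Recall that for $n>0$ the category $\crr_n$ is the arrow category $\two$ with unique nonidentity arrow $\mylam\colon\myt\to\myl$, while $\crr_0=1$. Hence a functor $\mathbb A(a_1,\ldots,a_n;b)\colon\crr_n\to\Set$ is exactly a function $j_{\overline a,b}\colon\mathbb A_{\myt}(\overline a;b)\to\mathbb A_{\myl}(\overline a;b)$ when $n>0$, and merely a set $\mathbb A_{\myl}(\,;b)$ when $n=0$; moreover the identities $1_a\in\mathbb A_{e}(a;a)=\mathbb A_{\myt}(a;a)$ are tight. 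When each $j_{\overline a,b}$ is a subset inclusion we simply regard $\mathbb A_{\myt}(\overline a;b)$ as a subset of $\mathbb A_{\myl}(\overline a;b)$. I would construct the two passages of the bijection separately and then check that they are mutually inverse.

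For the direction from (1) to (2) I would take as underlying ordinary multicategory the loose homs $\mathbb A_{\myl}(\overline a;b)$ for all $n\geq 0$, with identities $1_a$ and with substitution given by the skew-multicategory substitution at the all-loose types; since the operad multiplication sends $\myl(x_1,\ldots,x_n)$ to $\myl$, this substitution lands in the loose homs, and the skew associativity and identity axioms restrict to the corresponding axioms here. The distinguished tight subsets are then $\mathbb A_{\myt}(\overline a;b)\subseteq\mathbb A_{\myl}(\overline a;b)$, and they contain the identities since each $1_a$ is tight. The closure property is where I would use naturality of the substitution in the type variables $x,x_1,\ldots,x_n$: taking the inner types $x_2,\ldots,x_n$ to be $\myl$ and applying naturality to the arrow that is $\mylam$ in the $x$- and $x_1$-slots and the identity elsewhere, the rule sending $(\myt;\myt,\myl,\ldots,\myl)$ to $\myt$ shows that when $g$ and $f_1$ are tight the loose composite $g(f_1,\ldots,f_n)$ agrees, under the inclusion $j$, with the tight composite $\sigma_{\myt;\myt,\myl,\ldots,\myl}(g,f_1,\ldots,f_n)$, and is therefore tight.

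For the reverse direction from (2) to (1) I would define the functor $\mathbb A(\overline a;b)\colon\crr_n\to\Set$ to send $\myt$ to the specified subset $\mathbb A_{\myt}(\overline a;b)$, to send $\myl$ to $\mathbb A(\overline a;b)$, and to send $\mylam$ to the inclusion; for $n=0$ the functor is just the set of nullary maps. The identities are the $1_a$, tight by hypothesis. The substitution $\sigma_{x;x_1,\ldots,x_n}$ is the restriction of the multicategory substitution to $\mathbb A_x(\overline b;c)\times\prod_i\mathbb A_{x_i}(\overline a_i;b_i)$, corestricted into $\mathbb A_{\myt}$ precisely when the output type $x(x_1,\ldots,x_n)$ equals $\myt$, that is, when $x=x_1=\myt$; here the closure property of the tight subsets guarantees that this corestriction exists. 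Naturality in the type variables is automatic because every $j$ is an inclusion, and the $\crr$-multicategory associativity and identity axioms reduce directly to those of the underlying multicategory.

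It remains to see that the two passages are mutually inverse, and this is the one point that needs care. The content is that a skew multicategory with inclusions carries no information beyond its loose homs, its tight subsets, its identities, and its all-loose substitution: indeed the naturality squares for the arrows $\mylam$ in the various slots, combined with the inclusions, force every substitution $\sigma_{x;x_1,\ldots,x_n}$ to be the restriction of $\sigma_{\myl;\myl,\ldots,\myl}$, corestricted to $\mathbb A_{\myt}$ whenever the output is tight. Thus the data listed in (2) determine the skew multicategory uniquely, and the two constructions recover the same essential data in either order. The main obstacle is precisely this determinacy argument; once it is in place, every remaining verification is a routine unwinding of the operad multiplication rule for $\crr$ together with the skew-multicategory axioms.
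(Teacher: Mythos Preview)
Your proposal is correct. The paper states this proposition without proof, treating it as an immediate consequence of unpacking the definition of an $\crr$-multicategory; your careful verification supplies exactly that unpacking. The only place where one might ask for a touch more detail is the identity law for the loose multicategory in the passage from (1) to (2): since $1_a$ lives in $\mathbb A_\myt(a;a)$ rather than $\mathbb A_\myl(a;a)$, one should invoke naturality in the outer type variable (with respect to $\mylam$) to transport the skew identity law $\sigma_{\myt;x}(1_c,g)=g$ to the all-loose substitution, but this is routine and entirely in the spirit of your determinacy argument at the end.
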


\subsection{Skew multicategories versus ordinary multicategories}
There is a forgetful 2-functor $$U\colon\RMult \to \Mult$$ sending a skew multicategory $\mathbb A$ to the multicategory $\mathbb A_{\myl}$ with the same objects and sets of multimaps $\mathbb A_{\myl}(\overline{a};b)$.  Its underlying ordinary functor has a left adjoint, which views a multicategory as a skew multicategory in which \emph{only the identities are tight}.  $U$ also has a right 2-adjoint which views a multicategory as a skew multicategory in which \emph{all multimorphisms are tight.}  We can identify multicategories with skew multicategories satisfying either condition; the \emph{all multimorphisms are tight} identification has the advantage of extending not only to 1-cells but 2-cells too.

Since $\bba_{\myl}$ is a multicategory it has an underlying category $\ca_{\myl}$ with the same objects as $\bba$ and with morphisms the loose unary maps.  The components $$j_{a,b}\colon\bba_{t}(a;b) \to \bba_{\myl}(a;b)$$ then give the action on morphisms of an identity on objects functor $j\colon\ca \to \ca_{\myl}$, which often has a left adjoint -- see Proposition~\ref{prop:looseclassifier}.

\subsection{2-categorical examples}\label{sect:2cat}

Although no definition of skew multicategory was given in
 \cite{bourko-skew}, various ``2-categorical'' examples were
given there and in \cite{Hyland2002Pseudo}. These are perhaps best introduced via
a simple example. (Each of these examples is, in fact, a $\Cat$-enriched skew multicategory but we will not treat the enrichment here.)

Let $\mathbb {FP}$ denote the multicategory whose objects are
categories equipped with a choice of finite products.  For $n>0$ a
multimap is a functor $F\colon \ca_{1} \times \ldots \times \ca_{n} \to \cb$
preserving products in each variable in the usual up to isomorphism
sense.  A nullary map, an element of  $\mathbb {FP}(~;\cb)$,
 is an object
of $\cb$.  Substitution is defined in the usual way and
the multicategory axioms are routinely verified.  

We declare a multimap $F$ as above to be tight just when it preserves
the given products \emph{strictly in the first variable}; that
is, when each
functor $F(-,a_{2},\ldots,a_{n})\colon\ca_{1} \to \cb$ preserves the given
products strictly.   These tight morphism are easily seen to be
closed under substitution in the first variable, and the complete structure therefore forms
a skew multicategory.  

One can modify this in various ways; for example, there is a
   skew multicategory \bba with the same objects as $\mathbb{FP}$ in which
  $\bba_\ell(\ca_1,\ldots,\ca_n;\cb)$ is given by arbitrary functors $\ca_1\x
  \ldots\x \ca_n\to \cb$, and the tight maps are those which preserve
  finite products (in the usual sense) in the first variable. 

More generally, as proven in \cite{Hyland2002Pseudo}, for any
pseudo-commutative 2-monad $T$ on \Cat, there is a
multicategory whose objects are the (strict) $T$-algebras, and whose
multimaps are the functors $F\colon \ca_1\x \ldots\x \ca_n\to \cb$ equipped
with the structure of an algebra pseudomorphism in each variable
separately, with these $n$ pseudomorphism structures satisfying
certain compatibility conditions.  The tight morphisms, once again
 defined to be those which are  strict in the first variable,
are closed under substitution so that the complete structure forms a skew multicategory.

For instance, one could take $T$ to be the 2-monad for symmetric
strict monoidal categories (also known as permutative categories). 
The corresponding multicategory (of loose maps) was defined in
 \cite[Definition~3.1]{Elmendorf2006Rings}.  Or one could replace
permutative categories by symmetric monoidal categories, braided
monoidal categories, or categories with chosen limits or colimits of
some given class.

\subsection{Skew monoidal categories as skew multicategories}
In Section~\ref{sect:From} we will see that any skew monoidal category $\cc$ gives rise
to a skew multicategory $\bbc$, and that the resulting skew multicategories are precisely
the left representable ones, to which we now turn.
\subsection{Left representability}

%\subsection{Left representable skew multicategories and the main theorem}

By Definition~\ref{defn:weaklyrep}, a skew multicategory $\bba$ is
weakly representable if for 
each pair $x \in \crr_n$ and $\overline{a} \in A^{n}$ there exists an object $m_{x}\overline{a} \in A$ and multimap $$\theta_{x}(\overline{a}) \in \mathbb  A_{x}(\overline{a};m_{x}\overline{a})$$ with the property that the induced function
$$- \circ_{1} \theta_{x}(\overline{a})\colon\mathbb A_{t}(m_{x}\overline{a};b) \to  \bba_{x}(\overline{a};b)$$ is a
bijection for all $b \in A$. 
  
Observe that for all $x \in \crr_n$ we have the equation  $x_{n+m-1} = t_{m} \circ_{1} x_n$.  Therefore a multimap $\theta_{x}(\overline{a})$ as above induces for each $\overline{b} \in A^{m}, c \in A$ a function 
\begin{equation}\label{eq:left}
- \circ_{1} \theta_{x}(\overline{a})\colon \mathbb A_{\myt}(m_{x}\overline{a},\overline{b};c) \to \bba_{x}(\overline{a},\overline{b};c)
\end{equation}
In the case that the above function is invertible for all $\overline{b},c$ as above we say that $\theta_{x}(\overline{a})$ is \emph{left universal}.

\begin{definition}
Let $\mathbb A$ be a weakly representable skew multicategory.  We say that
$\mathbb A$ is \emph{left representable} if the function
\eqref{eq:left} is a bijection for all $x,\overline{a},\overline{b}$ and $c$,
and all universal multimaps $\theta_x(\overline{a})$.
\end{definition}

Let \Skew be the 2-category of skew monoidal categories, (lax)
 monoidal functors, and monoidal natural transformations.
The following theorem, proved in Section~\ref{sect:Skewmon}, is the main result of the present paper.  It is the skew analogue of Theorem 9.8 of \cite{Hermida2000Representable}.

{
\renewcommand{\theproposition}{\ref{thm:leftrep}}
\begin{theorem}
There is a 2-equivalence between the 2-category \Skew  and the full sub 2-category of $\RMult$ consisting of the left representable skew multicategories.
\end{theorem}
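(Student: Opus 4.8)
\emph{Overall strategy.} The plan is to build a 2-functor $\Phi\colon\Skew\to\RMult$, to check that it lands in the full sub-2-category of left representable skew multicategories, and then to prove it a 2-equivalence onto that sub-2-category by the two standard criteria: that $\Phi$ is locally an equivalence (indeed an isomorphism) of hom-categories, and that every left representable skew multicategory is isomorphic to one in its image. The heavy lifting — the five skew monoidal axioms — I would not attempt by hand, but route through the colax-algebra reformulation of Section~\ref{sect:Talg} together with \cite{Fsk2}.

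\emph{Construction of $\Phi$.} To a skew monoidal category $\cc$ I assign the skew multicategory $\bbc$ of Section~\ref{sect:From}, whose tight and loose multihoms are homs out of left-bracketed tensors,
\[ \bbc_\myt(a_1,\ldots,a_n;b)=\cc((\cdots(a_1a_2)\cdots)a_n,\,b),\qquad \bbc_\myl(a_1,\ldots,a_n;b)=\cc((\cdots((ia_1)a_2)\cdots)a_n,\,b), \]
with $\bbc_\myl(\,;b)=\cc(i,b)$. The comparison $j_{\overline a,b}$ is precomposition with $(\cdots((\lambda_{a_1})a_2)\cdots)a_n$, the identities are the $1_a$, and substitution is induced by $\alpha,\lambda,\rho$ (iterated associators reassociate $m_\myt(\overline a_1,\ldots,\overline a_n)$ into $(m_\myt\overline a_1)(m_\myt\overline a_2)\cdots$). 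A coherence argument shows $\bbc$ is a skew multicategory, and taking the $m_x\overline a$ to be the left-bracketed tensors above, with the $\theta$ identities, shows $\bbc$ is left representable: left universality reduces, via an associativity isomorphism, to the equality $m_\myt(m_\myl\overline a,\overline b)=m_\myl(\overline a,\overline b)$ and its tight analogue. On $1$-cells, a lax monoidal $F$ induces a multifunctor with $\cc(a_1\cdots a_n,b)\to\cd(Fa_1\cdots Fa_n,Fb)$ given by postcomposition with $Fg$ after the iterated lax constraint; the lax (not colax) constraints are precisely what turn $Fg\colon F(a_1\cdots a_n)\to Fb$ into a map out of $Fa_1\cdots Fa_n$. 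Monoidal natural transformations give $2$-cells in the evident way.

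\emph{Local isomorphism and essential surjectivity.} For the local statement I would show each $\Phi\colon\Skew(\cc,\cd)\to\RMult(\bbc,\Phi\cd)$ is an isomorphism of categories: a skew multifunctor $\bbc\to\Phi\cd$ is determined by its underlying functor $\ca\to\cd$ together with its values on the universal multimaps $\theta_\myt(a,b)$ and $\theta_\myl(\,)$, these values being exactly a lax monoidal structure, and preservation of substitution and identities translating precisely into the lax monoidal functor axioms; similarly a $2$-cell of multifunctors, a family $\phi_a\in\cd(Fa,Ga)$ natural against every multihom, is the same as a monoidal natural transformation, the naturality against the tight binary multihoms giving the transformation equation. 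For essential surjectivity I use the reverse construction: from a left representable $\bba$ with underlying category $\ca$, Proposition~\ref{prop:categoryA}, weak representability and Proposition~\ref{prop:unit} give functors $m$ normalised at the unit, and I set $a\ox b:=m_\myt(a,b)$, $i:=m_\myl(\,)$, with $\lambda_a$ the map corresponding under weak representability to $j(1_a)$, $\rho_a:=\theta_\myt(a,i)(1_a,\theta_\myl(\,))$, and $\alpha$ the unique factorisation of $\theta_\myt(a,bc)(1_a,\theta_\myt(b,c))$ through the universal ternary multimap. Left representability then guarantees $\Phi$ of the resulting skew monoidal category is isomorphic to $\bba$.

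\emph{The main obstacle.} The crux is verifying the five skew monoidal axioms for $(\ca,\ox,i,\alpha,\lambda,\rho)$ and the $2$-naturality of the reverse assignment; the one-sidedness of left representability and the tight/loose bookkeeping make a direct diagram chase delicate. I would instead observe that the data $(\ca,m)$ is exactly a normalised colax $\crr$-algebra, that left universality is precisely the condition cutting out, among such algebras, the image of the left representable skew multicategories, and that the companion result of \cite{Fsk2} identifies these colax $\crr$-algebras $2$-equivalently with skew monoidal categories — delivering the axioms and the functoriality in one step. The point to get right is that the multiplication rule of $\crr$ (tight exactly when $x=x_1=\myt$) is what makes the universal multimaps left but not right universal, and that this asymmetry matches the one-sided, non-invertible maps $\lambda\colon ia\to a$, $\rho\colon a\to ai$, $\alpha\colon(ab)c\to a(bc)$ of a skew monoidal category; it is exactly here that the present theorem departs from the symmetric statement of \cite{Hermida2000Representable}, and where the appeal to \cite{Fsk2} is concentrated.
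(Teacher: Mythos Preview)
Your proposal is essentially correct and follows the same route as the paper: both arguments pass through the normal colax algebras of Section~\ref{sect:Talg} and invoke the companion result of \cite{Fsk2}. The paper's organisation is somewhat more economical. Rather than defining $\Phi\colon\Skew\to\RMult$ directly and then appealing to colax algebras only at the hard step, it factors the whole 2-equivalence as the composite of two already-established 2-equivalences: $\Skew\simeq\LBCAlgc$ from \cite[Theorem~7.8]{Fsk2}, and $\LBCAlgc\simeq\{\text{left representable skew multicategories}\}$ from Theorem~\ref{thm:rep}. This avoids having to verify the local isomorphism on hom-categories by hand, since that is already contained in Theorem~\ref{thm:T}, and reduces the proof of Theorem~\ref{thm:leftrep} to a single sentence. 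Your sketch of the explicit passage in each direction is, in the paper, deferred to the subsections following the proof.

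One slip to fix: when you say ``normalised colax $\crr$-algebra'' you mean a normal colax $\crr^*$-algebra, i.e.\ a colax $\cl$-algebra in the paper's notation. It is $\ct^*$ rather than $\ct$ that appears on the colax side in Theorem~\ref{thm:T}, and so the result of \cite{Fsk2} concerns $\cl$-algebras, not $\crr$-algebras; the \LBC\ condition (the $\Gamma_{t_2,1,x_n}$ being identities) is formulated for $\cl$.
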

\addtocounter{proposition}{-1}
}

In order to obtain a better understanding of left representability, we
isolate two classes of universal multimap.   First, we
refer to a universal multimap $$\theta_{\myl_{0}}\in
\bba_\ell(~;m_{\myl_{0}})$$ as a \emph{nullary map classifier}; and
second, we refer to a universal multimap $$\theta_{t}(a_{1},a_{2}) \in \mathbb A_{t}(a_{1},a_{2};m_{t}(a_{1},a_{2}))$$ as a \emph{tight binary map classifier}.

Given these two classes of multimaps we can construct objects and multimaps $$\theta_{x}(\overline{a}) \in \mathbb  A_{x}(\overline{a};m_{x}\overline{a})$$ for all $x,\overline{a}$ using the inductive formulae
\begin{equation}\label{eq:unary}
m_{t_{1}}(a) = a \textnormal{ and } \theta_{t_{1}}(a) = 1_{a} \in \mathbb A_{t}(a;a)
\end{equation}
\begin{equation}\label{eq:indObject}
m_{x_{n+1}}(a_{1},\ldots,a_{n},a_{n+1}) = m_t(m_{x_{n}}(a_{1},\ldots,a_{n}),a_{n+1})
\end{equation}
and
\begin{equation}\label{eq:indMultimap}
\theta_{x_{n+1}}(a_{1},\ldots,a_{n},a_{n+1}) = \theta_t(m_{x}(a_{1},\ldots,a_{n}),a_{n+1}) \circ_{1} \theta_{x_{n}}(a_{1},\ldots,a_{n})
\end{equation}

\begin{proposition}\label{prop:LeftRep}
Let $\mathbb A$ be a skew multicategory.  The following are equivalent:
\begin{enumerate}
\item $\mathbb A$ is left representable; 
\item $\mathbb A$ admits tight binary map classifiers and a nullary map classifier and the multimaps $\theta_{x}(\overline{a})$ constructed from these according to \eqref{eq:unary}, \eqref{eq:indObject} and \eqref{eq:indMultimap} are universal;
\item $\mathbb A$ admits tight binary map classifiers and a nullary map classifier and these are left universal;
\item $\mathbb A$ is weakly representable and for all $b,c \in A$ the functions
\begin{equation*}
- \circ_{1} \theta_{x}(\overline{a})\colon \mathbb A_{\myt}(m_{x}\overline{a},b;c) \to \mathbb A_{x}(\overline{a},b;c)
\end{equation*}
are invertible.
\end{enumerate}
\end{proposition}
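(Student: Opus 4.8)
The plan is to prove the cycle of implications $(1)\Rightarrow(4)\Rightarrow(2)\Rightarrow(3)\Rightarrow(1)$. The implication $(1)\Rightarrow(4)$ is immediate, since $(4)$ is exactly the instance of left representability in which the trailing list $\overline{b}$ is a single object. Everything else rests on two ``composition lemmas'' recording how universality and left universality behave under the substitution of \eqref{eq:indMultimap}. The essential bookkeeping to keep in mind throughout is the type rule of $\crr$: a composite $g(f_1,\ldots,f_n)$ is tight exactly when $g$ and $f_1$ are, so that for tight unary $g$ the composite $g\circ_1\theta_x(\overline{a})$ always carries the label of $x$, and $-\circ_1\theta_{x_{n+1}}(\overline{a},a_{n+1})$ lands in $\bba_{x_{n+1}}(\overline{a},a_{n+1};c)$ as required.

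For $(4)\Rightarrow(2)$ I would prove that universality is closed under the inductive step: if $\theta_x(\overline{a})$ is universal and, in addition, one-step left universal (that is, \eqref{eq:left} holds for $\overline{b}$ a single object), and if the tight binary classifier $\theta_\myt(m_x\overline{a},a_{n+1})$ is universal, then the composite $\theta_{x_{n+1}}(\overline{a},a_{n+1})=\theta_\myt(m_x\overline{a},a_{n+1})\circ_1\theta_x(\overline{a})$ of \eqref{eq:indMultimap} is again universal. The proof factors $-\circ_1\theta_{x_{n+1}}(\overline{a},a_{n+1})$, by operadic associativity and the identity $m_{x_{n+1}}(\overline{a},a_{n+1})=m_\myt(m_x\overline{a},a_{n+1})$ of \eqref{eq:indObject}, as
\[
\bba_\myt(m_{x_{n+1}}(\overline{a},a_{n+1});c)\xrightarrow{-\circ_1\theta_\myt}\bba_\myt(m_x\overline{a},a_{n+1};c)\xrightarrow{-\circ_1\theta_x}\bba_{x_{n+1}}(\overline{a},a_{n+1};c),
\]
in which the first map is bijective by universality of the binary classifier and the second by one-step left universality of $\theta_x(\overline{a})$. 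Running this through the recursion \eqref{eq:unary}--\eqref{eq:indMultimap}, with base cases $\theta_{\myt_1}(a)=1_a$ and the nullary classifier $\theta_{\myl_0}$ and with the step applied to both labels $\myt,\myl$, then yields $(2)$: at each stage the multimap already constructed is universal, hence---being universal---one-step left universal by $(4)$, so the next one is universal as well.

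The second lemma asserts that \emph{left} universality is closed under the same step: if both $\theta_x(\overline{a})$ and $\theta_\myt(m_x\overline{a},a_{n+1})$ are left universal, then so is $\theta_{x_{n+1}}(\overline{a},a_{n+1})$, by reading the same factorisation with an arbitrary trailing list adjoined. Applied inductively from the base cases, this gives $(3)\Rightarrow(1)$: left universal binary and nullary classifiers force every $\theta_x(\overline{a})$ to be left universal, hence (taking $\overline{b}$ empty) universal, so $\bba$ is weakly representable and \eqref{eq:left} holds; since universal multimaps are unique up to isomorphism and \eqref{eq:left} is invariant under such isomorphisms, the condition passes to \emph{all} universal $\theta_x(\overline{a})$, which is $(1)$. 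For $(2)\Rightarrow(3)$ I would instead exploit that $(2)$ gives universality of the constructed multimaps of \emph{every} arity. The crux is the reassociation identity
\[
\theta_{\myt_{m+1}}(m_\myt(a_1,a_2),\overline{b})\circ_1\theta_\myt(a_1,a_2)=\theta_{\myt_{m+2}}(a_1,a_2,\overline{b}),
\]
together with $m_{\myt_{m+1}}(m_\myt(a_1,a_2),\overline{b})=m_{\myt_{m+2}}(a_1,a_2,\overline{b})$, both proved by induction on the length of $\overline{b}$ from \eqref{eq:indObject} and \eqref{eq:indMultimap}. Under the two universality isomorphisms this identity shows $-\circ_1\theta_\myt(a_1,a_2)$ to be a bijection for every $\overline{b}$, i.e.\ left universality of the tight binary classifier; the parallel identity $\theta_{\myt_{m+1}}(m_{\myl_0},\overline{b})\circ_1\theta_{\myl_0}=\theta_{\myl_m}(\overline{b})$ handles the nullary classifier.

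The main obstacle is not any single implication but the careful coordination of the two inductions with the $\crr$-type bookkeeping. At each stage one must check that the substitutions produce multimaps of exactly the predicted label $\myt$ or $\myl$, that the representing objects arising from the two reassociations coincide \emph{on the nose} rather than merely up to isomorphism, and that the hypothesis of $(4)$---left universality for a single trailing object---is precisely what the universality induction consumes at each step, while for $(3)$ the full trailing list is available from the outset. I expect the reassociation identities driving $(2)\Rightarrow(3)$ to be the most calculation-heavy ingredient, since they require unwinding \eqref{eq:indMultimap} and repeatedly invoking operadic associativity, but they are entirely routine.
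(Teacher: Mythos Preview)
Your argument is correct, and the steps $(1)\Rightarrow(4)$ and $(4)\Rightarrow(2)$ coincide with the paper's. The organisation differs at the remaining link. The paper does not attempt $(2)\Rightarrow(3)$ directly; instead it proves $(2)\Rightarrow(1)$ by a single induction on the length of the trailing list $\overline{c}$, using the commutative triangle
\[
\xymatrix{
\bba_{\myt}(m_{\myt}(m_{x}(\overline{a}),b),\overline{c};d)\ar[d]_{-\circ_{1}\theta_{\myt}(m_{x}(\overline{a}),b)}\ar[drr]^{-\circ_{1}\theta_{x}(\overline{a},b)} \\
\bba_{\myt}(m_{x}(\overline{a}),b,\overline{c};d)\ar[rr]_{-\circ_{1}\theta_{x}(\overline{a})} && \bba_{x}(\overline{a},b,\overline{c};d)
}
\]
in which the vertical and diagonal legs are invertible by the inductive hypothesis (applied to the constructed $\theta_{\myt_2}$ and $\theta_{x_{n+1}}$ respectively), forcing the horizontal one to be as well. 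Then $(1)\Rightarrow(3)$ is trivial, and $(3)\Rightarrow(2)$ is exactly your composition-of-left-universals lemma. Your route $(2)\Rightarrow(3)\Rightarrow(1)$ replaces this uniform induction by the reassociation identities $\theta_{\myt_{m+1}}(m_{\myt}(a_1,a_2),\overline{b})\circ_1\theta_{\myt}(a_1,a_2)=\theta_{\myt_{m+2}}(a_1,a_2,\overline{b})$ (and the nullary analogue), themselves proved by the same kind of induction, followed by the composition lemma. The paper's path is a little shorter---one induction handling all $x$ simultaneously rather than separate reassociation identities for the binary and nullary cases---but yours has the virtue of making explicit the passage from ``the constructed classifiers are left universal'' to ``\emph{all} universal classifiers are left universal'' via uniqueness up to isomorphism, a point the paper leaves tacit.
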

\begin{proof}
By definition of left representability $(1 \implies 3,4)$.  

Suppose that $\mathbb A$ admits tight binary and nullary map classifiers and consider the multimaps $\theta_{x}(\overline{a})$ constructed as in \eqref{eq:unary}, \eqref{eq:indObject} and \eqref{eq:indMultimap}.  Associativity then gives a commutative triangle
\begin{equation}
\xymatrix{
\mathbb A_{ \myt}(m_{t}(m_{x}(\overline{a}),b),\overline{c};d)\ar[d]_{- \circ_{1}\theta_{t}(m_{x}(\overline{a}),b)} \ar[drr]^{- \circ_{1} \theta_{x}(\overline{a},b)} \\
\mathbb A_{ \myt}(m_{x}(\overline{a}),b,\overline{c};d) \ar[rr]_{-\circ_{1} \theta_{x}(\overline{a})} && \mathbb A_{x}(\overline{a},b,\overline{c};d)
}
\end{equation}
for all tuples $\overline{a},\overline{c}$ and objects $b,d$.\newline{}
Let us show that $(2 \implies 1)$.  We must show that $-\circ_{1}
\theta_{x}(\overline{a})\colon  \bba_{x}(m_{x}(\overline{a}),\overline{c};d) \to
\mathbb A_{x}(\overline{a},\overline{c};d)$ is invertible for all for
all $x,\overline{a}$ and tuples $\overline{c}$ of length $n$.  The
case $n=0$ is our hypothesis.  For the inductive step, consider
$(b,\overline{c})$ as a generic $(n+1)$-tuple; we must show that the
horizontal leg above is invertible.  But both the vertical and
diagonal cases are invertible by the inductive hypothesis; hence the
horizontal leg is  so too. 

Let us prove that $(4 \implies 2)$.  By assumption we certainly have
tight binary and nullary map classifiers.  Furthermore the identities
$1\colon a \to a$ of \eqref{eq:unary} are always $t_{1}$-universal.
It remains to prove that if $(4)$ holds then the inductive
constructions of \eqref{eq:indObject} and \eqref{eq:indMultimap}
preserve universality.  That is, we must prove that the composite 
\[ \xymatrix{
\bba_{ \myt} (m_t(m_x(\overline{a}),a_{n+1});b)
\ar[d]_{-\circ_1\theta_{t}(m_x(\overline{a}),a_{n+1})} \\
\bba_{ \myt}(m_x(\overline{a}),a_{n+1};b)
\ar[r]_-{-\circ_1\theta_{x}(\overline{a})} & 
\bba_x(\overline{a},a_{n+1};b) } \]
is invertible. The horizontal map is invertible by universality, and
the vertical is invertible by universality and the assumption in (4).

Finally we prove that $(3 \implies 2)$.  For this the basic and
evident observation is that if $f \in \mathbb A_{x}(\overline{a};b)$
and $g \in \mathbb A_{t}(b,c;d)$ are both left universal then so is $g
\circ_{1} f \in \bba_{x}(\overline{a},b,c;d)$.  Now the identities
$1\colon a \to a$ of \eqref{eq:unary} are always left universal as are the given multimaps in the nullary and binary case by assumption; since the maps $\theta_{x}(\overline{a})$ are obtained from these by composition as above these are always left universal too.
\end{proof}

\subsection{Closed skew multicategories with unit}

\begin{definition}
A skew multicategory $\mathbb A$ is said to be \emph{closed} if for all $b, c \in \mathbb A$ there exists an object $[b,c]$ and tight multimap $e_{b,c}\in \bba_{t}([b,c],b; c)$ 
with the universal property that the induced function
\begin{equation}\label{eq:multi0}
{
e_{b,c} \circ_{1} -\colon\mathbb A_{x}(a_{1}, \ldots, a_{n};[b,c]) \to \mathbb A_{x}(a_{1}, \ldots, a_{n},b;c)}
\end{equation}
is a bijection for all $a_{1},\ldots,a_{n} \in A$ and $x \in \crr_n$.
\end{definition}

If a closed skew multicategory admits a nullary map classifier, then we call it a \emph{closed skew multicategory with unit.}

Let \SkewC be the 2-category of skew closed categories
\cite{skewclosed}, closed functors and closed  natural 
transformations.  In Section~\ref{sect:Skewmon}  we prove the following theorem.
{
\renewcommand{\theproposition}{\ref{thm:SkewC}}
\begin{theorem}  
There is a 2-equivalence between the 2-category \SkewC and the full sub 2-category of $\RMult$ consisting of the closed skew multicategories with unit.
\end{theorem}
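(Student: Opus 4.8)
The plan is to establish the 2-equivalence by constructing a pair of 2-functors directly and checking that they are mutually pseudo-inverse, in close parallel with Theorem~\ref{thm:leftrep} and with the classical (non-skew) correspondence between closed categories and closed multicategories. The guiding observation is that closedness together with a nullary map classifier \emph{forces} the multihoms of such a skew multicategory to be iterated internal homs, so that the entire $\crr$-multicategory can be reconstructed from its underlying skew closed category; the two constructions are then visibly inverse.

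First I would build the 2-functor from closed skew multicategories with unit to \SkewC. Given $\bba$, the underlying category is $\ca$, with $\ca(a,b)=\bba_\myt(a;b)$; the internal hom is the representing object $[b,c]$ supplied by closedness, and since $\ca(-,[b,c])\cong\bba_\myt(-,b;c)$ naturally, the argument of Proposition~\ref{prop:categoryA} makes $[-,-]\colon\ca\op\x\ca\to\ca$ a functor. The unit is $i:=m_{\myl_{0}}$, from the nullary map classifier. The three structure maps are read off from the evaluations $e_{b,c}\in\bba_\myt([b,c],b;c)$ and the classifier $\theta_{\myl_{0}}\in\bba_\myl(~;i)$: the map $j_a\colon i\to[a,a]$ is the name of the loose identity under $\ca(i,[a,a])\cong\bba_\myl(a;a)$; the unit map $[i,a]\to a$ is $e_{i,a}\circ_{2}\theta_{\myl_{0}}$, which is tight since its head and first argument are; and the composition map $[b,c]\to[[a,b],[a,c]]$ is the transpose, across the closedness bijections, of $e_{b,c}\circ_{2}e_{a,b}\in\bba_\myt([b,c],[a,b],a;c)$. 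Each of the five axioms of \cite{skewclosed} then equates two tight multimaps assembled from the $e_{b,c}$, and since both sides are determined by their images under a common composite of closedness bijections, each axiom reduces to an instance of associativity of substitution. On $1$-cells, a morphism $F$ of closed skew multicategories induces the lax comparisons $F[b,c]\to[Fb,Fc]$ and $i\to Fi$ as the transposes of $F(e_{b,c})$ and $F(\theta_{\myl_{0}})$; that these obey the closed-functor axioms follows from $F$ preserving substitution, and $2$-cells go to closed natural transformations.

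For the reverse 2-functor I would send a skew closed category $\cc$ to the skew multicategory $\bba$ on the same objects defined by the forced formulas
\[ \bba_\myt(a_1,\ldots,a_n;b)=\cc(a_1,[a_2,\ldots,[a_n,b]\ldots]),\qquad \bba_\myl(a_1,\ldots,a_n;b)=\cc(i,[a_1,[a_2,\ldots,[a_n,b]\ldots]]), \]
with comparison $j_{\overline a,b}$ obtained by naming the first variable through $j_{a_1}$. Closedness and the nullary map classifier are then immediate. The \emph{main obstacle} is the rest of the structure: defining the substitution operations and verifying the $\crr$-typed associativity and identity axioms. Substitution must be built from the internal composition and the unit map $i_{(-)}$ of $\cc$, threading arguments through nested homs, and it is exactly here that all five skew closed axioms are consumed---just as in the non-skew case the closed-category axioms furnish the coherence of the associated closed multicategory. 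The tight/loose bookkeeping is an added complication: one must confirm that a substitute lies in the tight hom precisely when the head map and its first argument are tight, which reflects both the distinguished role of the first variable in the formulas above and the non-invertibility of $i_{(-)}$.

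Finally, to conclude the 2-equivalence I would check that the two 2-functors are mutually pseudo-inverse. One round trip is essentially definitional, since reconstructing the multihoms from the underlying skew closed category returns the iterated-hom expressions above, which the closedness bijections identify with the original $\bba_x(\overline a;b)$. For the other round trip one recovers the skew closed structure maps from the reconstructed evaluations and checks naturality in both $\bba$ and $\cc$ to promote the comparisons to $2$-natural isomorphisms. The whole argument thus runs parallel to Theorem~\ref{thm:leftrep}, with representability by a tensor product replaced throughout by representability by an internal hom.
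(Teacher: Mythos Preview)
Your proposal outlines a direct, Manzyuk-style construction: extract the skew closed structure from a closed skew multicategory with unit, and conversely build the skew multicategory by declaring the multihoms to be iterated internal homs, then verify the two constructions are mutually pseudo-inverse. This is a legitimate strategy---indeed the paper itself remarks that its argument ``is rather different in character to Manzyuk's''---so your route is essentially the one the authors consciously set aside. The paper instead proceeds indirectly in three steps: (I) Street's result that skew closed structures on $\cc$ correspond, via Day convolution, to certain right skew monoidal structures on $[\cc,\Set]$; (II) the dual of the companion paper's Theorem~7.8, converting such right skew monoidal structures into normal lax $\crr$-algebra structures on $[\cc,\Set]$ with invertible left-bracketing substitution maps; (III) a general convolution correspondence between $\ct$-multicategory structures on $\cc$ and normal lax $\ct$-algebra structures on $[\cc,\Set]$ with each $m_x$ cocontinuous, specialised to $\ct=\crr$ so that the representability and invertibility conditions of (I)--(II) translate exactly into the nullary-map-classifier and closedness conditions. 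Your approach is more elementary and yields explicit formulas for the multihoms, but pays for this with the laborious direct construction of substitution and verification of the $\crr$-multicategory axioms---which you rightly flag as the main obstacle and do not actually carry out. The paper's approach trades that labour for presheaf machinery, effectively reducing the closed case to the monoidal one already handled by Theorem~\ref{thm:leftrep} and the companion paper, at the cost of being less self-contained.
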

\addtocounter{proposition}{-1}
}

We will need one element of the above correspondence in the next section.  By ~\eqref{eq:multi0} we have isomorphisms
\begin{equation*}
{
{e_{b,c} \circ_{1} -} \colon\mathbb A_{t}(a;[b,c]) \to \mathbb A_{t}(a,b;c)}
\end{equation*}
and these are natural in $a$.  By Proposition~\ref{prop:categoryA} the right hand side is a functor $(\ca^{2})^{op} \times \ca \to \Set$ whence by Yoneda the objects $[b,c]$ extend uniquely to a functor $[-,-]\colon\ca^{op} \times \ca \to \ca$ for which the isomorphisms are also natural in $b$ and $c$.  In particular, for each $b \in A$ we have a functor $[b,-]\colon\ca \to \ca$.

\subsection{More on left representability and closedness}

Combining the above cases, let \SkewMC be the 2-category of closed skew monoidal categories, lax monoidal functors and monoidal transformations.  In Section~\ref{sect:Skewmon} we prove:
{
\renewcommand{\theproposition}{\ref{thm:leftrepclosed}}
\begin{theorem}  
The 2-equivalence of Theorem~\ref{thm:leftrep} restricts to a
2-equivalence between the 2-category  \SkewMC of \emph{closed} skew
monoidal categories and the full sub-2-category of \RMult consisting
of the left representable \emph{closed} skew multicategories.
 % There is a 2-equivalence between the 2-category \Skew  and the full sub 2-category of $\RMult$ consisting of the left representable skew multicategories.
\end{theorem}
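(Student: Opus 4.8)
The plan is to leverage the 2-equivalence of Theorem~\ref{thm:leftrep} and the observation that both $\SkewMC$ and the stated sub-2-category of $\RMult$ are \emph{full} sub-2-categories. On the skew-monoidal side, $\SkewMC$ has the same 1-cells and 2-cells as $\Skew$ (lax monoidal functors and monoidal transformations), and on the multicategory side we take the full sub-2-category of $\RMult$. A 2-equivalence restricts to a 2-equivalence between full sub-2-categories precisely when its object-class correspondence matches up: the equivalence and its pseudo-inverse must send objects of one designated class to the other. Therefore the entire theorem reduces to a single object-level statement: a left representable skew multicategory $\bba$ is closed if and only if its associated skew monoidal category $(\ca,\ox,i)$ is closed, i.e. each functor $(-)\ox b\colon\ca\to\ca$ admits a right adjoint. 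All 1-cell and 2-cell data then come for free from Theorem~\ref{thm:leftrep}.

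To prove this correspondence, I would translate the closedness condition \eqref{eq:multi0} through the representing isomorphisms. By weak representability we have $\bba_x(\overline{a};d)\cong\ca(m_x\overline{a},d)$ naturally, and by left universality \eqref{eq:left} together with the tight binary classifier we have $\bba_x(\overline{a},b;c)\cong\bba_\myt(m_x\overline{a},b;c)\cong\ca\bigl((m_x\overline{a})\ox b,c\bigr)$. Under these identifications the closedness map $e_{b,c}\circ_1-\colon\bba_x(\overline{a};[b,c])\to\bba_x(\overline{a},b;c)$ becomes a map
\[ \ca(m_x\overline{a},[b,c]) \to \ca\bigl((m_x\overline{a})\ox b,c\bigr), \]
and the first key step is to check, using associativity of substitution, that this is exactly the component at $d=m_x\overline{a}$ of a \emph{single} natural transformation $\ca(-,[b,c])\to\ca\bigl((-)\ox b,c\bigr)$, namely the one classifying $e_{b,c}$ in $\ca$.

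Granting that identification, the reduction is immediate. Specialising to $x=t_1$ and $\overline{a}$ a single object $d$ gives $m_x\overline{a}=d$ by \eqref{eq:unary}, so $d$ ranges over all objects of $\ca$; hence the family of bijections \eqref{eq:multi0} for all $x,\overline{a}$ holds if and only if the natural transformation $\ca(-,[b,c])\to\ca((-)\ox b,c)$ is invertible, which says precisely that $[b,-]$ is right adjoint to $(-)\ox b$. This gives both directions: a closed $\bba$ yields the adjunctions and hence a closed $\ca$, while a closed $\ca$ supplies the internal homs $[b,c]$ and evaluation maps $e_{b,c}$, with left representability promoting the single unary-tight adjunction bijection back up to all $x,\overline{a}$ via the same reductions. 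Note that left representability already furnishes the nullary map classifier (the unit $i$), so ``closed'' and ``closed with unit'' coincide for such $\bba$, matching the fact that a skew monoidal category carries a unit by definition.

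The main obstacle is the compatibility verification in the middle step: showing that $e_{b,c}\circ_1-$ at a general type $(x,\overline{a})$ agrees, through the left-universal and representability isomorphisms, with precomposition by the fixed morphism in $\ca$ classifying $e_{b,c}$. This is a bookkeeping argument tracking how the universal multimaps $\theta_x(\overline{a})$ interact with $e_{b,c}$ under substitution; the crux is to commute the two substitutions $-\circ_1\theta_x(\overline{a})$ and $e_{b,c}\circ_1-$ using the single associativity axiom, after which naturality in $b$ and $c$ follows as in the discussion around \eqref{eq:multi0}. Once this coherence is established, the equivalence of the two closedness conditions, and hence the restricted 2-equivalence of Theorem~\ref{thm:leftrepclosed}, follows formally from Theorem~\ref{thm:leftrep}.
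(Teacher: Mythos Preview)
Your proposal is correct and takes essentially the same approach as the paper: reduce to the object-level claim that closedness of $\bba$ matches closedness of $\ca$, then use the commuting square arising from associativity of the two substitutions $-\circ_1\theta_x(\overline{a})$ and $e_{b,c}\circ_1-$, together with left universality of the $\theta_x(\overline{a})$, to transfer between the unary tight case and the general $(x,\overline{a})$ case. The paper's proof is simply the compact version of your argument, drawing that single square explicitly in $\bbc$ rather than first passing through the representability isomorphisms into $\ca$.
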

\addtocounter{proposition}{-1}
}
% \begin{theorem}
% There is a 2-equivalence between \SkewMC and the full sub 2-category of $\RMult$ consisting of the left representable closed skew multicategories.
% \end{theorem}

In general left representability is a stronger condition than weak
representability, but if the skew multicategory is closed then the two
notions coincide, as the following result shows.  
% The following result describes various criteria for a closed skew multicategory to be representable.
\begin{proposition}\label{prop:closedleftrep}
 For a closed skew multicategory  $\mathbb A$, the following are
equivalent:  
\begin{enumerate}
\item $\mathbb A$ is left representable;
\item $\mathbb A$ is weakly representable;
\item $\mathbb A$ admits a nullary map classifier and tight binary map classifiers;
\item  $\mathbb A$ admits a nullary map classifier and each functor $[b,-]\colon\ca \to \ca$ has a left adjoint.
\end{enumerate}
\end{proposition}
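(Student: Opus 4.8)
The plan is to prove the four conditions equivalent via the cycle $(1)\Rightarrow(2)\Rightarrow(3)\Rightarrow(1)$, together with a separate argument for $(3)\Leftrightarrow(4)$. The first two implications are immediate. Left representability includes weak representability by definition, giving $(1)\Rightarrow(2)$. For $(2)\Rightarrow(3)$, a weakly representable skew multicategory has universal multimaps $\theta_{x}(\overline{a})$ for every $x\in\crr_n$ and every $\overline{a}$; specialising to the nullary type $\myl_{0}$ and the tight binary type $\myt_{2}$ produces a nullary map classifier and tight binary map classifiers.

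For $(3)\Leftrightarrow(4)$ I would combine the tight binary classifier with the closed structure, writing $\ca(a,b)=\mathbb{A}_{\myt}(a;b)$ for the underlying category. A tight binary classifier gives $\ca(m_{\myt}(a,b),c)\cong\mathbb{A}_{\myt}(a,b;c)$, while the $n=1$, $x=\myt_{1}$ instance of the closedness bijection \eqref{eq:multi0} gives $\mathbb{A}_{\myt}(a,b;c)\cong\ca(a,[b,c])$. Composing yields $\ca(m_{\myt}(a,b),c)\cong\ca(a,[b,c])$ naturally in $a$ and $c$, exhibiting $m_{\myt}(-,b)$ as a left adjoint to $[b,-]$. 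Conversely, a left adjoint $L_{b}\dashv[b,-]$ gives $\ca(L_{b}a,c)\cong\ca(a,[b,c])\cong\mathbb{A}_{\myt}(a,b;c)$, so $L_{b}a$ represents $\mathbb{A}_{\myt}(a,b;-)$; the image of the identity under this representing isomorphism is the required universal multimap $\theta_{\myt}(a,b)$. As a nullary map classifier is assumed in both $(3)$ and $(4)$, this establishes $(3)\Leftrightarrow(4)$.

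The substance of the proof is $(3)\Rightarrow(1)$: I would show that in a \emph{closed} skew multicategory every universal multimap is automatically left universal, after which the implication $(3)\Rightarrow(1)$ of Proposition~\ref{prop:LeftRep} applies to the nullary and tight binary classifiers. Given a universal $\theta_{x}(\overline{a})$ and a trailing list $\overline{b}=(b_{1},\ldots,b_{m})$, iterating \eqref{eq:multi0} strips the $\overline{b}$ arguments into an iterated internal hom $[\overline{b},c]$, yielding isomorphisms $\mathbb{A}_{x}(\overline{a},\overline{b};c)\cong\mathbb{A}_{x}(\overline{a};[\overline{b},c])$ and $\mathbb{A}_{\myt}(m_{x}\overline{a},\overline{b};c)\cong\mathbb{A}_{\myt}(m_{x}\overline{a};[\overline{b},c])$. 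These assemble into a square whose two horizontal legs are $-\circ_{1}\theta_{x}(\overline{a})$ and whose vertical legs are the currying isomorphisms. The bottom leg is a bijection by universality of $\theta_{x}(\overline{a})$ with target $[\overline{b},c]$, so once the square commutes the top leg---which is exactly the function \eqref{eq:left}---is a bijection, i.e.\ $\theta_{x}(\overline{a})$ is left universal.

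The one point requiring care, and the main obstacle, is the commutativity of that square: it asserts that substituting $\theta_{x}(\overline{a})$ into position $1$ commutes with substituting the evaluation maps $e_{b_{i},-}$ into the trailing positions. Since these substitutions occur in disjoint argument blocks, this follows directly from the associativity axiom together with the identity law; concretely, for a single trailing argument the computation $e_{b,c}(h,1_{b})(\theta_{x}(\overline{a}),1_{b})=e_{b,c}(h(\theta_{x}(\overline{a})),1_{b})$ settles the case $m=1$, and the general case follows by iteration. With left universality of the classifiers established, the implication $(3)\Rightarrow(1)$ of Proposition~\ref{prop:LeftRep} closes the cycle and completes the proof.
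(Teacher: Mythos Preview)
Your proposal is correct and follows essentially the same approach as the paper: the key lemma that in a closed skew multicategory every universal multimap is left universal, proved by currying away the trailing arguments via the closedness bijection and checking commutativity of the resulting square, is exactly what the paper does (by induction on the length of $\overline{b}$ rather than iterating all at once), and both proofs then invoke Proposition~\ref{prop:LeftRep} and handle $(3)\Leftrightarrow(4)$ via the same representability comparison. The only difference is cosmetic: you curry the whole tail in one go, the paper peels off one variable at a time.
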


\begin{proof}
We will show that in a closed skew multicategory any universal multimap $\theta_{x}(\overline{a})$ is left universal.  The equivalence of (1) and (2) is then immediate, whilst the equivalence of (1) and (3) then follows from Proposition~\ref{prop:LeftRep}.
We must prove that for all tuples $\overline{b}$ the function
\begin{equation*}
- \circ_{1} \theta_{x}(\overline{a})\colon \mathbb A_{\myt}(m_{x}\overline{a},\overline{b};c) \to \mathbb A_{x}(\overline{a},\overline{b};c)
\end{equation*}
is invertible for all $c$.  We argue by induction on the length $n$ of $\overline{b}$.  The case $n=0$ is assumed.  Observe that the following diagram commutes.
\begin{equation*}
\xymatrix{
\mathbb A_{t}(m_{x}\overline{a},\overline{b};[b^{\prime},c]) \ar[d]_{e_{b^{\prime},c} \circ_{1} -} \ar[rr]^{- \circ_{1} \theta_{x}(\overline{a})} && \mathbb A_{x}(\overline{a},\overline{b};[b^{\prime},c])
 \ar[d]^{e_{b^{\prime},c} \circ_{1} -} \\
\mathbb A_{t}(m_{x}\overline{a},\overline{b},b^{\prime};c) \ar[rr]^{- \circ_{1} \theta_{x}(\overline{a})} && \mathbb A_{x}(\overline{a},\overline{b},b^{\prime};c)
}
\end{equation*}

Therefore the invertibility of the vertical morphisms (by closedness) and the top horizontal morphism (by induction) ensure the invertibility of the bottom horizontal morphism, as required.

To prove (3 $\iff$ 4) it suffices to show that each $[b,-]$ admits a
left adjoint if and only if $\bba$ admits tight binary map
classifiers.  The first condition is equivalent to asking for a
functor $\otimes\colon\ca^{2} \to \ca$ and isomorphisms $\ca(a \otimes
b,c) \cong \ca(a,[b,c])$ natural in each variable.  The second is
equivalent to asking for a functor $m_{t}\colon\ca^{2} \to \ca$ and isomorphisms  $\ca(m_{t}(a,b),c) \cong \bba_{t}(a,b;c)$ natural in each variable.  Since we have isomorphisms $\ca(a,[b,c]) \cong \bba_{t}(a,b;c)$ natural in each variable the result follows.
\end{proof}

\begin{examples}
Each of the 2-categorical skew multicategories described in \ref{sect:2cat} is associated to an \emph{accessible }pseudocommutative 2-monad on $\Cat$.  Such skew multicategories are both left representable and closed.

The results required to establish these claims are contained in
Section 6 of \cite{bourko-skew}.  Briefly, closedness goes back to
Theorem 11 of \cite{Hyland2002Pseudo}, the nullary map classifier is
the free $T$-algebra on $1$ whilst Proposition 6.3
of \cite{bourko-skew} establishes that each $[A,-]$ has a left adjoint.  Accordingly such  skew multicategories are left representable by Proposition~\ref{prop:closedleftrep} above.
\end{examples}

\section{Colax $\ct$-algebras and $\ct$-multicategories}\label{sect:Talg}

In this section we define colax $\ct$-algebras for a $\Cat$-operad $\ct$.  
\begin{definition}
A colax $\ct$-algebra is a category $\ca$ together with
\begin{itemize}
\item functors $m_{n}\colon\ct_n \times \ca^{n} \to \ca$ whose value at $(x,(a_{1},\ldots,a_{n}))$ we denote by $m_{x}(a_1,\ldots,a_n)$;
\item morphisms $p_{a}\colon m_{e}(a) \to a$ natural in $a$;
\item substitution maps
\begin{equation}\label{eq:subColax}
\xymatrix{
m_{x(x_{1},\ldots,x_{n})}(\overline{a}_{1}, \ldots, \overline{a}_{n}) \ar[rr]^-{\Gamma_{x_{1},\ldots,x_{n},x}} && m_{x}(m_{x_{1}}(\overline{a}_{1}),\ldots, m_{x_{n}}(\overline{a}_{n}))
}
\end{equation}
natural in all variables $x,x_{i},\overline{a}_{i}$
\end{itemize}
satisfying the associativity and identity axioms which are the natural
``\ct-typed" analogues of those for colax monoidal categories: see
\cite[Definition~3.1.1]{Leinster-book}, for example, for the dual
case. 
\end{definition}

\begin{example}
  If $\ct$ is the terminal \Cat-operad \tmop with $\tmop_n=1$ for all $n$, a
  colax-$\ct$-algebra is a colax monoidal category. 
\end{example}

\begin{definition}
A colax $\ct$-algebra is said to be \emph{normal} if the morphisms
$p_{a}\colon m_{e}(a) \to a$ are identities.  
\end{definition}

\begin{remark}
We will be interested primarily in normal colax $\ct$-algebras, since these are the ones corresponding to $\ct$-multicategories.  In the context of a normal colax $\ct$-algebra we obtain substitution maps
\begin{equation*}
\xymatrix{
m_{x \circ_{i} y}(a_{1},\ldots,a_{i-1},\overline{b},a_{i+1},\ldots,a_{n}) \ar[r]^-{\Gamma_{y,i,x}} & m_{x}(a_{1},\ldots,a_{i-1},m_{y}(\overline{b}),a_{i+1},\ldots,a_{n})
}
\end{equation*}
as a special case of \eqref{eq:subColax} on setting $x_{j}=1$ for $j \neq i$ and $x=y$.
% $\Gamma_{(1,\ldots,1,y_m,1\ldots,1),x_n}$
\end{remark}

A lax morphism of
colax $\ct$-algebras $A \to B$  involves a functor $F\colon\ca \to \cb$
together with natural families of maps 
\[ \tilde{F}_{x,\overline{a}}\colon m_{x}(Fa_1,\ldots,Fa_n) \to Fm_x(a_1,\ldots,a_n) \]
commuting with substitution and identities in the obvious sense. 

Given two such lax morphisms $F$ and $G$,  a 2-cell $\phi\colon F \to G$ is a natural transformation
with the property that the square
\begin{equation*}
\xymatrix{
m_{x}(Fa_{1},\ldots,Fa_{n}) \ar[d]_{m_{x}(\phi_{a_{1}},\ldots,\phi_{a_{n}})} \ar[r]^{\tilde{F}} & Fm_{x}(a_{1},\ldots,a_{n}) \ar[d]^{\phi_{m_{x}(a_{1},\ldots,a_{n})}} \\
m_{x}(Ga_{1},\ldots,Ga_{n}) \ar[r]_{\tilde{G}} & Gm_{x}(a_{1},\ldots,a_{n})
}
\end{equation*}

Normal colax $\ct$-algebras and their morphisms and 2-cells together form
a 2-category $\ncnotdual_{\myl}$. 

Recall that $\ct^*$ denotes the operad obtained from $\ct$ by replacing
each $\ct_n$ by $\ct\op_n$.  

\begin{theorem}\label{thm:T}
There is a fully faithful 2-functor $\nc_{\myl}\to \TMult$ whose
essential image consists of the weakly representable
$\ct$-multicategories. 
\end{theorem}

\proof
We shall go through the structure involved in a weakly representable
$\ct$-multicategory and see that it corresponds to that of a normal
colax $\ct^*$-algebra, and that this correspondence respects the various
notions of morphism and 2-cell.

First of all, as observed in Proposition~\ref{prop:categoryA} above, a
$\ct$-multicategory \bba determines a category $\ca$ with the same
objects, and functors 
\[ \ct_n\x (\ca^n)\op\x \ca \to \Set \]
and so, in the representable case, functors
\[ \ct\op_n\x \ca^n\to \ca. \]
By Proposition~\ref{prop:unit}, we may take $m_e\colon \ca\to\ca$ to
be the identity. 

Conversely, given such functors we may define the $\ct$-multihoms
\[ \bba_x(a_1,\ldots,a_n;b) = \ca(m_x(a_1,\ldots,a_n),b).\]
So far this accounts for the objects, the multihoms, the identities, and the
substitutions of the form $g(f_1,\ldots,f_n)$ and $h(g)$ where $g\in
\bba_e(b;b)$ for some $b$. This structure satisfies the identity laws
as well as associativity of substitution, as far as it is defined. 

Next we turn to the general form of substitution. 
In our current weakly representable setting, this takes the form of
maps 
\[ \ca(m_{x}(\overline{b}),c) \x \prod\limits^n_{i=1} \ca(m_{x_{i}}(\overline{a}_{i}),b_{i}) \to \ca(m_{x(x_{1},\ldots,x_{n})}(\overline{a}_{1},\ldots,\overline{a}_{n}),c) \]
natural in all variables. By naturality in $(b_{1},\ldots,b_{n})$ and the Yoneda
lemma that amounts to giving natural maps 
\[ \ca(m_{x}(m_{x_{1}}(\overline{a}_{1}),\ldots, m_{x_{n}}(\overline{a}_{n})),c) \to \ca(m_{x(x_{1},\ldots,x_{n})}(\overline{a}_{1}, \ldots, \overline{a}_{n}),c) \]
and now by naturality in $c$ and the Yoneda lemma this amounts to
giving natural maps
\[
m_{x(x_{1},\ldots,x_{n})}(\overline{a}_{1}, \ldots, \overline{a}_{n}) \to m_{x}(m_{x_{1}}(\overline{a}_{1}),\ldots, m_{x_{n}}(\overline{a}_{n}))
\]
or, in other words, to a natural transformation $\tilde{m}$ as in
\eqref{eq:subColax}.

The coassociativity condition for $\tilde{m}$ is equivalent to
associativity of substitution. 

This defines a bijective correspondence between normal colax
$\ct^*$-algebras and weakly representable $\ct$-multicategories, with chosen
representations, including the canonical choice of representatives for
the $\bba_e(a;-)$.

Now suppose that $\bba$ and $\bbb$ are weakly representable
$\ct$-multicategories corresponding to normal colax $\ct^*$-algebras \ca and
\cb. 

What is needed to give a morphism $\bba\to\bbb$ of
$\ct$-multicategories? First of all there is an assignment $a\mapsto Fa$ on
objects. Next there are maps $F_x\colon \bba_x(a_1,\ldots,a_n;b)\to
\bbb_x(Fa_1,\ldots,Fa_n;Fb)$; in particular, there are maps
$\bba_e(a;b)\to \bbb_e(Fa;Fb)$ defining a functor $F\colon
\ca\to\cb$. The remaining $F_x$ correspond, via Yoneda, to maps 
\[ \xymatrix{  m_x(Fa_1,\ldots,Fa_n) \ar[r]^{\tilde{F}} &  Fm_x(a_1,\ldots,a_n). }\]
The functoriality condition on the $F_x$ corresponds to the
associativity condition for the $\tilde{F}$ to define a morphism of
normal colax $\ct^*$-algebras. 

The case of 2-cells follows similarly using Yoneda once again. 
\endproof

\subsection{The skew case}

By Theorem~\ref{thm:T} we know that the 2-category of normal colax
$\crr^*$-algebras is equivalent to the 2-category of weakly representable
$\crr$-multicategories.  From now on, we shall be more interested in $\crr^*$
 than $\crr$; it is therefore convenient to rename it $\cl$. In particular $\cl_0=\{l_{0}\}$ and 
$\cl_n=\{\lambda_{n}\colon l_{n} \to t_{n}\}$ for $n > 0$.

The following is the colax $\cl$-algebra version of left
representability; it appeared in \cite{Fsk2}. 

\begin{definition}
An $\LBC$-algebra is a normal colax $\cl$-algebra for which the maps $\Gamma_{\myt,1,x_n}\colon m_x(a_1,\ldots,a_{n+1})\to
  m_{\myt}(m_x(a_1,\ldots,a_n),a_{n+1})$ are identities for all $n$ and
  all $x\in \cl_n$.
\end{definition}
 
In the name \LBC-algebra, the ``LB'' stands for left-bracketed and the
``C'' for colax; see \cite{Fsk2}.  
We define $\LBCAlgc$ to be the full sub-2-category of $\ncl_{\myl}$ consisting
of the \LBC-algebras.

\begin{theorem}\label{thm:rep}
The 2-equivalence $\ncl_{\myl} \simeq \RMult$ restricts to a 2-equivalence
between $\LBCAlgc$ and the full sub-2-category of $\RMult$ consisting of the left
representable skew multicategories.
\end{theorem}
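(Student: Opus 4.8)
The plan is to leverage the 2-equivalence already supplied by Theorem~\ref{thm:T}. Write $\Phi\colon\ncl_{\myl}\to\RMult$ for the fully faithful 2-functor it produces, whose essential image consists of the weakly representable skew multicategories and which sends a normal colax $\cl$-algebra $\ca$ to the skew multicategory with $\bba_x(\overline a;b)=\ca(m_x\overline a,b)$. Since $\LBCAlgc$ is by definition a full sub-2-category of $\ncl_{\myl}$, and the left representable skew multicategories form a full sub-2-category of $\RMult$, and since restricting a fully faithful 2-functor to full sub-2-categories on either side leaves it fully faithful, it suffices to match the two classes at the level of objects. Concretely I would prove: (i) if $\ca$ is an \LBC-algebra then $\Phi\ca$ is left representable; and (ii) every left representable skew multicategory is isomorphic to $\Phi\ca$ for some \LBC-algebra $\ca$. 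Granting (i) and (ii), the restricted 2-functor from $\LBCAlgc$ to the left representable skew multicategories is fully faithful and essentially surjective, hence a 2-equivalence.

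The technical heart is a single identification. For $\Phi\ca$ the representing object of $\bba_x(\overline a;-)$ is $m_x\overline a$ and the universal multimap $\theta_x(\overline a)$ corresponds to $\id_{m_x\overline a}$, so $\Phi\ca$ is automatically weakly representable. Unwinding the substitution formula recorded in the proof of Theorem~\ref{thm:T}, the left-universality comparison $-\circ_1\theta_x(\overline a)\colon\bba_{\myt}(m_x\overline a,b;c)\to\bba_x(\overline a,b;c)$ becomes, under the representability isomorphisms $\bba_{\myt}(m_x\overline a,b;c)\cong\ca(m_{\myt}(m_x\overline a,b),c)$ and $\bba_x(\overline a,b;c)\cong\ca(m_{x_{n+1}}(\overline a,b),c)$, precisely precomposition with the colax structure map $\Gamma_{\myt,1,x_n}\colon m_{x_{n+1}}(\overline a,b)\to m_{\myt}(m_x\overline a,b)$ from the definition of \LBC-algebra. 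By the Yoneda lemma $-\circ_1\theta_x(\overline a)$ is a bijection for all $c$ if and only if this $\Gamma_{\myt,1,x_n}$ is invertible.

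For direction (i): if $\ca$ is \LBC\ then each $\Gamma_{\myt,1,x_n}$ is an identity, hence invertible, so the comparison above is bijective for all $b,c$; this is exactly condition~(4) of Proposition~\ref{prop:LeftRep}, so $\Phi\ca$ is left representable. For direction (ii): starting from a left representable $\bba$, Theorem~\ref{thm:T} presents it as $\Phi\ca$ for some normal colax $\cl$-algebra $\ca$, but I am free in the choice of representing data. I would use Proposition~\ref{prop:LeftRep}~$(1\Rightarrow2)$ to take the universal multimaps built from the tight binary and nullary classifiers via the inductive formulae \eqref{eq:unary}, \eqref{eq:indObject}, \eqref{eq:indMultimap}. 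With this choice \eqref{eq:indObject} gives $m_{x_{n+1}}(\overline a,b)=m_{\myt}(m_x\overline a,b)$ on the nose, so $\Gamma_{\myt,1,x_n}$ is an endomorphism of this object; since \eqref{eq:indMultimap} identifies $\theta_{x_{n+1}}(\overline a,b)$ with $\theta_{\myt}(m_x\overline a,b)\circ_1\theta_x(\overline a)$, and this composite corresponds under the identification of the previous paragraph to $\Gamma_{\myt,1,x_n}$ while $\theta_{x_{n+1}}(\overline a,b)$ corresponds to the identity on its representing object, we are forced to conclude $\Gamma_{\myt,1,x_n}=\id$. Thus the resulting $\ca$ is an \LBC-algebra.

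The main obstacle is precisely the gap between $\Gamma_{\myt,1,x_n}$ being invertible --- which is all that representability can detect through the Yoneda lemma --- and its being the identity, which is what \LBC-ness demands. Note that \LBC-ness is not invariant under isomorphism of colax algebras, so this is genuinely a statement about a judicious choice of representatives rather than a property one transports across $\Phi$; left representability, through Proposition~\ref{prop:LeftRep}, furnishes exactly the inductively constructed universal multimaps that normalise $\Gamma_{\myt,1,x_n}$ to an identity. The only point requiring real care is to confirm that this inductive choice is compatible with associativity of substitution, so that the $\Gamma=\id$ data really does assemble into a normal colax $\cl$-algebra; this, however, is immediate from the coassociativity-versus-associativity identification already established in the proof of Theorem~\ref{thm:T}.
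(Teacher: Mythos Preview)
Your proposal is correct and follows essentially the same approach as the paper: both directions rest on identifying the left-universality comparison $-\circ_1\theta_x(\overline a)$ with precomposition by $\Gamma_{\myt,1,x_n}$, then invoking Proposition~\ref{prop:LeftRep}(4) for direction~(i) and Proposition~\ref{prop:LeftRep}(2) (the inductive choice of classifiers) for direction~(ii). Your write-up is considerably more detailed in spelling out the Yoneda argument and the invertible-versus-identity subtlety, but the structure and key lemmas match the paper's proof exactly.
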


\proof
Given an \LBC-algebra $\ca$, the corresponding weakly representable
$\crr$-multicategory $\mathbb A$  is related to it as in  the
following equation.  \begin{equation*}
\xymatrix{
\ca(m_{t}(m_{x}(a_{1},\ldots, a_{n}),a_{n+1}),b)\ar@{=}[d] \ar[rr]^-{\ca(\Gamma_{\myt_2,1,x_n},b)} && \ca(m_{t}(m_{x}(a_{1},\ldots,a_{n+1}),b) \ar@{=}[d] \\
\mathbb A(m_{x}(a_{1},\ldots, a_{n}),a_{n+1};b) \ar[rr]_-{\theta_{x}(a_{1},\ldots,a_{n}) \circ_{1} -} && \mathbb A(a_{1},\ldots,a_{n+1};b)
}
\end{equation*}
Therefore by Proposition~\ref{prop:LeftRep}(4) $\mathbb A$ is left
representable.  In the other direction, it suffices to show that each
left representable skew multicategory arises -- up to isomorphism in
$\RMult$ -- from an \LBC-algebra.  By
Proposition~\ref{prop:LeftRep}(2) we can equip $\mathbb A$ with a
choice of multimap classifiers satisfying $m_{t}(m_{x}(a_{1},\ldots,
a_{n}), a_{n+1})=m_{x}(a_{1},\ldots,a_{n+1})$ and
with a similar equation for the universal multimaps, and with
$m_{t_{1}}(a)=a$.  With this choice, the corresponding colax
$\cl$-algebra satisfies  the \LBC\ property, as required. %Property (S), as required.
\endproof

\section{Skew multicategories versus skew monoidal  categories
  and skew closed categories}\label{sect:Skewmon}

We are now in a position to prove our first main result, which
combines Theorem~\ref{thm:rep} above with Theorem 7.8 of the companion paper \cite{Fsk2}.

\begin{theorem}\label{thm:leftrep}
There is a 2-equivalence between the 2-category \Skew and the full sub 2-category of $\RMult$ consisting of the left representable skew multicategories.
\end{theorem}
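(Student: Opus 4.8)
The plan is to obtain the claimed 2-equivalence by \emph{composing} two 2-equivalences sharing the intermediate 2-category $\LBCAlgc$ of \LBC-algebras. The first of these is already available as Theorem~\ref{thm:rep}, which exhibits a 2-equivalence between $\LBCAlgc$ and the full sub-2-category of $\RMult$ consisting of the left representable skew multicategories. The second is Theorem 7.8 of the companion paper \cite{Fsk2}, which I would invoke to produce a 2-equivalence $\Skew\simeq\LBCAlgc$. Since 2-equivalences are closed under composition, pasting these together yields a 2-equivalence between \Skew and the full sub-2-category of $\RMult$ consisting of the left representable skew multicategories, which is exactly the assertion.

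First I would recall, at least informally, the shape of the equivalence $\Skew\simeq\LBCAlgc$ supplied by \cite{Fsk2}, so that the composite becomes transparent. A skew monoidal category $\cc$ is sent to the normal colax $\cl$-algebra whose tight operations $m_{\myt_n}$ are the left-bracketed tensors (binary $a_1a_2$, ternary $(a_1a_2)a_3$, and so on) and whose loose operations $m_{\myl_n}$ are the same products with the unit inserted at the front (binary $(ia_1)a_2$, ternary $((ia_1)a_2)a_3$, with $m_{\myl_0}=i$); the comparison $m_{\lambda_n}\colon m_{\myl_n}\to m_{\myt_n}$ is induced by the left unit map $\lambda\colon ia_1\to a_1$. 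The \LBC\ condition holds by construction, since these operations are built by left-bracketing, so that the structure maps $\Gamma_{\myt,1,x_n}$ are identities; the remaining colax structure maps encode the associativity constraint $\alpha$ and the right unit constraint $\rho$. The substance of Theorem 7.8 of \cite{Fsk2} is that this assignment extends to a 2-equivalence, carrying lax monoidal functors to lax morphisms of colax $\cl$-algebras and monoidal transformations to 2-cells.

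Next I would feed this equivalence through Theorem~\ref{thm:rep}. Under the 2-equivalence $\ncl_{\myl}\simeq\RMult$ of Theorem~\ref{thm:T}, a normal colax $\cl$-algebra $\ca$ corresponds to the weakly representable skew multicategory $\bbc$ with $\bbc_x(a_1,\dots,a_n;b)=\ca(m_x(a_1,\dots,a_n),b)$, and the \LBC\ identities translate precisely into the left-bracketing equations~\eqref{eq:indObject} for the multimap classifiers, so that $\bbc$ is left representable by Proposition~\ref{prop:LeftRep}(4). Composing with the equivalence of the previous paragraph realizes a skew monoidal category $\cc$ as a left representable skew multicategory whose tight binary classifier is the tensor and whose nullary classifier is the unit $i$; this is exactly the direct passage $\cc\mapsto\bbc$ recorded in Section~\ref{sect:From}, confirming that the composite is the expected correspondence.

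Since the theorem is obtained purely by composition, there is no serious new obstacle in this proof itself; the real work is carried by the two cited theorems. The one point I would check explicitly is the \emph{coherent matching of the three notions of morphism and 2-cell}: a lax monoidal functor, a lax morphism of \LBC-algebras, and a morphism of skew multicategories must be identified compatibly, and similarly for their 2-cells. This is automatic because both Theorem~\ref{thm:rep} and the equivalence of \cite{Fsk2} are full 2-equivalences meeting at $\LBCAlgc$, but it is precisely here that one must confirm that the lax (as opposed to colax or pseudo) variance lines up on each side, since a variance mismatch is the only way the composition could break down.
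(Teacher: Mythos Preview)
Your proposal is correct and follows precisely the paper's own argument: compose the 2-equivalence $\Skew\simeq\LBCAlgc$ from \cite[Theorem~7.8]{Fsk2} with Theorem~\ref{thm:rep} to obtain the result. The additional detail you give about the shape of the composite and the matching of lax morphisms is helpful commentary but not logically required, since both ingredients are already full 2-equivalences.
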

\begin{proof}
By \cite[Theorem~7.8]{Fsk2} we have a 2-equivalence $\Skew \simeq \LBCAlgc$.  By Theorem~\ref{thm:leftrep} above we have a 2-equivalence between $\LBCAlgc$ and the full sub-2-category of $\RMult$ consisting of the left representable skew multicategories.  Combining these gives the result.
\end{proof}

We now break down the above processes to give a direct description of the relationship between skew monoidal categories and left representable skew multicategories.

\subsection{From a skew monoidal category to a left representable skew multicategory}\label{sect:From}

Let $\cc$ be a skew monoidal category with unit $i$.  We write $a_{1}\ldots a_{n}$ for the left bracketed tensor product in $\cc$; thus $a_{1}a_{2}$ denotes the usual tensor product with the formula $a_{1}\ldots a_{n}a_{n+1} = (a_{1}\ldots a_{n})a_{n+1}$ determining the higher bracketings.

The corresponding \LBC-algebra structure on $\cc$ has 
\begin{align*}
  m_{\myl}(-) &=i\\
  m_{t}(a_{1},\ldots,a_{n}) &=a_{1}\ldots a_{n} \\
  m_{\myl}(a_{1},\ldots,a_{n}) &=ia_{1}\ldots a_{n} 
\end{align*}
with $m_{\lambda}(a_{1},\ldots,a_{n})\colon m_{\myl}(a_{1},\ldots,a_{n}) \to m_{t}(a_{1},\ldots,a_{n})$ given by 
\begin{equation*}
\xymatrix{
ia_{1}\ldots a_{n} \ar[rr]^{\lambda_{a_{1}}a_{2}\ldots a_{n}} && a_{1}\ldots a_{n} & .
}
\end{equation*}

The substitution morphisms 
\[  \xymatrix @C4pc { 
m_{x(x_{1},\ldots,x_{n})}(\overline{a}_{1},\ldots,\overline{a}_{n})
\ar[r]^-{\Gamma_{x,x_{1},\ldots,x_{n}}} &
  m_{x}(m_{x_{1}}(\overline{a}_{1}),\ldots,m_{x_{n}}(\overline{a}_{n}))
}
\]
are the unique natural families definable for each skew monoidal $\cc$ naturally in $\cc$.   These are obtained by repeated applications of the right unit maps $\rho$ followed by applications of associativity maps $\alpha$, each possibly tensored on either side.  For instance $m_{\myl}(a,b,c,d) \to m_{\myl}(m_{t}(a,b),m_{\myl}(c,d))$ is the map given by
\begin{equation*}
\xymatrix @C1.4pc {
(((ia)b)c)d \ar[rr]^-{((i(ab))\rho)d} && (((ia)b)(ic))d  \ar[rr]^{((\alpha(ic))d } && (((i(ab))(ic))d \ar[r]^-{\alpha} & (i(ab))((ic)d).
}
\end{equation*}
See Section 7.3 of the companion paper \cite{Fsk2} for further details on the \LBC-algebra associated to a skew monoidal category.

Accordingly,  the corresponding skew multicategory $\bbc$ has 
\begin{align*}
  \bbc(~;a) &= \cc(i,a) \\
  \bbc_\myt(a_1,\ldots,a_n;b) &= \cc(a_1\ldots a_n,b) \\
  \bbc_\myl(a_1,\ldots,a_n;b) &= \cc(ia_1\ldots a_n,b) 
\end{align*}
with $j_{\overline{a},b}\colon\bbc_\myt(a_1,\ldots,a_n;b)\to \bbc_\myl(a_1,\ldots,a_n;b)$ given by
\begin{equation*}
\xymatrix{
\cc(a_{1}\ldots a_{n},b) \ar[rr]^{- \circ \lambda_{a_{1}}a_{2}\ldots a_{n}} && \cc(ia_{1}\ldots a_{n},b) .
}
\end{equation*}

The substitution morphisms send $$(f,g_{1},\ldots,g_{n}) \in \cc(m_{x}(\overline{b}),c) \x \prod\limits^n_{i=1} \cc(m_{x_{i}}(\overline{a}_{i}),b_{i})$$ to the composite
$f \circ m_{x}(g_{1},\ldots,g_{n}) \circ \Gamma_{x,x_{1},\ldots,x_{n}}(\overline{a}_{1},\ldots,\overline{a}_{n})$.

\subsection{From a left representable skew multicategory to a skew monoidal category}

This construction is more straightforward and we give it directly,
without mentioning the intermediate colax $\cl$-algebra structure explicitly.  Let $\cc$ be a left representable skew multicategory with $\cc$ its underlying category.  

Tight binary multimap classifiers $a \otimes b$ give representations $$\cc(a \otimes b,c) \cong \bbc_{t}(a,b;c)$$
and we write $\theta_{t}(a,b) \in  \bbc_{t}(a,b;a\otimes b)$
for the universal multimap.  Functoriality of the right hand side transports to yield a functor $\otimes\colon\cc^{2} \to \cc$.

The nullary map classifier $i$ is defined by a representation $$\cc(i,a) \cong \bbc_{\myl}(-;a)$$
with universal multimap $\theta_{ \myl_{0}} \in \bbc_{\myl}(-;i)$. The object $i$ plays the role of the unit. 
\begin{itemize}
\item By left representability we have $$\cc((a\otimes b) \otimes c,d) \cong \bbc_{t}(a \otimes b,c;d) \cong \bbc_{t}(a,b,c;d) \hspace{0.3cm} .$$
Taking $d=a \otimes (b \otimes c)$ and $$\theta_{t}(a, b \otimes c) \circ_{2} \theta_{t}(b,c) \in \bbc_{t}(a,b,c;a \otimes (b \otimes c))$$
then gives rise to the associator $\alpha\colon (a \otimes b)\otimes c \to a \otimes (b \otimes c)$.
\item By left representability we have
$$\cc(i \otimes a,b) \cong \bbc_{t}(i,a;b) \cong \bbc_{\myl}(a;b)$$
and now taking $b=a$ and the image of the identity $1_{a}$ under
$\bbc_{t}(a;a) \to \bbc_{\myl}(a;a)$ yields the left unit map
$\lambda\colon i \otimes a \to a$.
\item The right unit map $\rho\colon a \to a \otimes i$ is the composite $\theta_{t}(a,i) \circ_{2} \theta_{\myl_{0}}  \in \bbc_{t}(a;a \otimes i)$.
\end{itemize}

The left unit map admits another interpretation worth mentioning and which follows immediately from its construction above.
\begin{proposition}\label{prop:looseclassifier}
Let $\bbc$ be a left representable skew multicategory.  The identity
on objects functor $j\colon\cc \to \cc_{\myl}$ has a left
adjoint, whose counit is the left unit map $\lambda\colon i \otimes a \to a$ for the corresponding skew monoidal structure.
\end{proposition}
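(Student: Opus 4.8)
The plan is to exhibit the left adjoint by a pointwise representability argument and then read off its counit, observing that the latter is exactly the formula already used to define $\lambda$ in the previous subsection. First I would fix an object $a$ and reuse the chain of bijections recorded there,
\[ \cc(i \otimes a, b) \cong \bbc_{t}(i,a;b) \cong \bbc_{\myl}(a;b) = \cc_{\myl}(a, jb), \]
where the first map is $-\circ_{1}\theta_{t}(i,a)$, coming from the universal property of the tight binary map classifier $i \otimes a = m_{t}(i,a)$, and the second is $-\circ_{1}\theta_{\myl_{0}}$, which is a bijection precisely because left representability renders the nullary map classifier $\theta_{\myl_{0}}\in\bbc_{\myl}(~;i)$ left universal. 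Both bijections are natural in $b$: for the first this is the naturality of the weak representation (Proposition~\ref{prop:categoryA}), and for the second it follows from associativity of substitution, since a morphism $h \in \cc(b,b')$ acts on all three sets by $h \circ_{1} -$. This displays the functor $\cc_{\myl}(a, j-)\colon\cc\to\Set$ as representable with representing object $i \otimes a$.

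Since $j$ is the identity on objects, the objects of $\cc_{\myl}$ are precisely those of $\cc$, so the representation above is available for every object $a$ of $\cc_{\myl}$. By the usual criterion, $j$ then has a left adjoint $L \dashv j$ with $La = i \otimes a$ on objects; functoriality of $L$ and naturality of the bijection in $a$ follow formally from uniqueness of representing objects, so no extra work is needed there.

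It remains to identify the counit. For $L \dashv j$ presented by a natural isomorphism $\phi\colon\cc(La,b)\cong\cc_{\myl}(a,jb)$, the counit at $b$ is $\epsilon_{b} = \phi^{-1}(\id_{jb})$, the preimage of the identity under $\phi$ at $(a,b)=(b,b)$. Here $\id_{jb}$ is the identity of $\cc_{\myl}$, that is $j(1_{b})\in\bbc_{\myl}(b;b)$, and transporting it back along $\cc(i \otimes b, b)\cong\bbc_{\myl}(b;b)$ is verbatim the recipe defining the left unit map $\lambda_{b}\colon i \otimes b \to b$ given above. Hence $\epsilon_{b}=\lambda_{b}$, as claimed. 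There is no genuinely hard step here: the substance lies in having set up the construction of $\lambda$ so that it already coincides with the counit formula. The only points needing care are bookkeeping ones — keeping the direction of the adjunction straight, so that $\lambda\colon i \otimes b \to b$ appears as a counit rather than a unit, and confirming naturality in $b$ so that the family of pointwise representations genuinely assembles into an adjunction.
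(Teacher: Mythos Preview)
Your proof is correct and is essentially an explicit unpacking of what the paper means when it says the result ``follows immediately from its construction above'': the chain $\cc(i\otimes a,b)\cong\bbc_t(i,a;b)\cong\bbc_\myl(a;b)=\cc_\myl(a,jb)$ is precisely the one used to define $\lambda$, and you have correctly read off the adjunction and its counit from it. The paper gives no further argument, so your approach coincides with the intended one.
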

Accordingly $i \otimes a$ classifies loose unary maps.  In particular, in the setting of a pseudo-commutative 2-monad $T$, wherein the identity on objects functor is the inclusion $j\colon\TAlgs \to \TAlg$ viewing strict morphisms as pseudomorphisms, the tensor product $i \otimes a$ is the \emph{pseudomorphism classifier} \cite{Blackwell1989Two-dimensional}.

We can specialise the equivalence of Theorem~\ref{thm:leftrep} as
follows.  We can identify multicategories with skew multicategories
all of whose multimorphisms are tight; thus we can speak of a left
representable multicategory.  Recall that a skew monoidal category
$\cc$ is said to be left normal if the left unit map $\lambda\colon i \otimes a \to a$ is invertible.

\begin{theorem}\label{thm:leftnormal}
There is a 2-equivalence between the 2-categories of left normal skew monoidal categories and of left representable multicategories.
\end{theorem}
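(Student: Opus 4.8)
The plan is to restrict the 2-equivalence of Theorem~\ref{thm:leftrep} along the identification, recorded in Section~\ref{sect:Skewmult}, of multicategories with those skew multicategories in which \emph{all multimorphisms are tight}. Recall that this identification is full on $1$-cells and $2$-cells, so the left representable multicategories constitute a full sub-2-category of \RMult, namely the full sub-2-category of the left representable skew multicategories whose comparison maps $j_{\overline{a},b}\colon \bba_\myt(\overline{a};b)\to\bba_\myl(\overline{a};b)$ are bijections for every $n>0$. Since Theorem~\ref{thm:leftrep} already provides a 2-equivalence between \Skew and the full sub-2-category of \RMult on the left representable skew multicategories, it suffices to show that, under the correspondence $\cc\mapsto\bbc$ of Section~\ref{sect:From}, a skew monoidal category $\cc$ is left normal if and only if its associated skew multicategory $\bbc$ has all multimorphisms tight.

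First I would carry out the key computation. By the explicit description in Section~\ref{sect:From}, the comparison $j_{\overline{a},b}\colon \cc(a_1\ldots a_n,b)\to\cc(ia_1\ldots a_n,b)$ is precomposition with the morphism $\lambda_{a_1}a_2\ldots a_n\colon ia_1\ldots a_n\to a_1\ldots a_n$. By the Yoneda lemma, $j_{\overline{a},b}$ is a bijection for all $b$ precisely when $\lambda_{a_1}a_2\ldots a_n$ is invertible. Hence $\bbc$ has all multimorphisms tight exactly when $\lambda_{a_1}a_2\ldots a_n$ is invertible for every $n>0$ and every $a_1,\ldots,a_n$.

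It then remains to identify this condition with left normality. If $\cc$ is left normal then each $\lambda_{a_1}$ is invertible, and since $\lambda_{a_1}a_2\ldots a_n$ is obtained from $\lambda_{a_1}$ by tensoring on the right (repeatedly) with $a_2,\ldots,a_n$, functoriality of the tensor product makes it invertible as well. Conversely, taking $n=1$ shows that invertibility of all the $\lambda_{a_1}a_2\ldots a_n$ already forces each $\lambda_a\colon ia\to a$ to be invertible, that is, $\cc$ to be left normal. This establishes the desired biconditional.

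Finally I would assemble these facts into the asserted 2-equivalence. The forward direction of the biconditional shows that the 2-functor of Theorem~\ref{thm:leftrep} sends left normal skew monoidal categories into left representable multicategories; the converse direction shows that the canonical left representable skew monoidal category associated to an all-tight left representable skew multicategory is again left normal, which yields essential surjectivity of the restricted 2-functor onto the left representable multicategories. Fullness and faithfulness on $1$-cells and $2$-cells are inherited, since we are merely restricting a 2-equivalence to full sub-2-categories on either side. The only point requiring a little care -- and the closest thing to an obstacle -- is precisely this last bookkeeping: one must check that the two defining properties (left normality on the one side, all-tightness on the other) are matched by the equivalence, and this is exactly what the biconditional guarantees, so that no computation beyond the short Yoneda argument is needed.
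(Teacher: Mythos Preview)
Your proposal is correct and follows essentially the same approach as the paper: both restrict the 2-equivalence of Theorem~\ref{thm:leftrep} to the full sub-2-categories determined by left normality on one side and the ``all multimorphisms tight'' condition on the other, and both establish the required biconditional via the explicit description of $j_{\overline{a},b}$ as precomposition with $\lambda_{a_1}a_2\ldots a_n$. The only cosmetic difference is that the paper invokes Proposition~\ref{prop:looseclassifier} (identifying $\lambda$ as the counit of the left adjoint to $j\colon\cc\to\cc_\ell$) for one direction, whereas you argue both directions directly via Yoneda; and the paper packages the essential-surjectivity step using the unit of the adjunction $U\dashv\iota$, whereas you phrase it as matching full sub-2-categories.
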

\begin{proof}
The 2-functor $\iota\colon \Mult \to \RMult$ viewing multicategories
$\bbc$ as skew multicategories in which all multimaps are tight
exhibits $\Mult$ as a full sub-2-category of $\RMult$.  In such a
$\bbc$ the inclusion $j\colon\cc \to \cc_{\myl}$ is the identity
whereby its left adjoint has invertible counit.  Since in the left
representable case Proposition~\ref{prop:looseclassifier} ensures that
the counit is $\lambda\colon i \otimes a \to a$, we conclude that the associated skew monoidal category is left normal.

In the opposite direction let $\cc$ be left normal skew monoidal.  By their construction in Section~\ref{sect:From} the components $j_{\overline{a},b}$ are the maps
\begin{equation*}
\xymatrix{
\cc(a_{1}\ldots a_{n},b) \ar[rr]^{- \circ \lambda_{a_{1}}a_{2}\ldots a_{n}} && \cc(ia_{1}\ldots a_{n},b) .
}
\end{equation*}
and therefore are invertible whenever $\cc$ is left normal.  Now the
forgetful $U\colon \RMult \to \Mult$ has right adjoint $\iota$ and the unit component of the adjunction is invertible precisely at those $\cc$ with the above property; thus $\cc$ is isomorphic to such a multicategory.  It follows that the 2-equivalence of Theorem~\ref{thm:leftrep} restricts to yield the desired one.
\end{proof}

Thus the skew aspect of a skew multicategory arises from the
(possible) failure of left normality in the corresponding skew
monoidal category. 

\subsection{Skew monoidal closed categories}
Recall  \cite{skewclosed}  that a skew monoidal category
\cc is said to be \emph{closed} if for all $b,c \in \cc$ there 
exist an object $[b,c]$ and morphism $e_{b,c}\colon[b,c] \otimes b\to c$ such that the induced function

$$
e_{b,c} \circ (- \otimes 1_{b})\colon \cc(a,[b,c]) \to \cc(a \otimes b,c)
$$
is a bijection for all $a$.  

\begin{theorem}\label{thm:leftrepclosed} 
The 2-equivalence of Theorem~\ref{thm:leftrep} restricts to a
2-equivalence between the 2-category \SkewMC of closed skew
monoidal categories and the full sub-2-category of \RMult consisting
of the left representable closed skew multicategories.
\end{theorem}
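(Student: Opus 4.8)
The plan is to avoid reproving anything about functors or $2$-cells by leveraging the fact that Theorem~\ref{thm:leftrep} is already a $2$-equivalence. Both \SkewMC and the $2$-category of left representable closed skew multicategories are \emph{full} sub-$2$-categories, of \Skew and of \RMult respectively, singled out purely by a property of their objects (closedness); the morphisms and $2$-cells are in each case inherited unchanged. Since a $2$-equivalence restricts to a $2$-equivalence between full sub-$2$-categories as soon as its object assignment matches them up (closedness being invariant under isomorphism in \RMult), the whole statement reduces to one object-level claim: under the correspondence of Theorem~\ref{thm:leftrep}, a left representable skew multicategory $\bbc$ is closed precisely when the associated skew monoidal category $\cc$ is closed, and the internal homs may be chosen to agree. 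Note that left representability already supplies the nullary map classifier, so no separate hypothesis on the unit is needed.

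Throughout I would use the explicit description of Section~\ref{sect:From}, according to which $\bbc$ and $\cc$ have the same underlying category, with $a\otimes b=m_t(a,b)$ and $i=m_{\myl_0}$, and with natural representations $\cc(a\otimes b,c)\cong\bbc_t(a,b;c)$ and $\cc(i,a)\cong\bbc_\myl(~;a)$. For the direction in which $\bbc$ is assumed closed, I would specialise the universal property \eqref{eq:multi0} to the tight unary case, obtaining natural bijections
\[ \cc(a,[b,c])=\bbc_t(a;[b,c])\xrightarrow{\ e_{b,c}\circ_1-\ }\bbc_t(a,b;c)\cong\cc(a\otimes b,c). \]
Transporting the multicategorical evaluation $e_{b,c}\in\bbc_t([b,c],b;c)$ across $\bbc_t([b,c],b;c)\cong\cc([b,c]\otimes b,c)$ yields a morphism $[b,c]\otimes b\to c$, and using the substitution formula of Section~\ref{sect:From} together with normality (so that the relevant $\Gamma$ is an identity) I would check that the displayed bijection is exactly $e_{b,c}\circ(-\otimes 1_b)$. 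Hence $\cc$ is closed with the same internal homs.

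For the converse I would start from a closed skew monoidal $\cc$ whose skew multicategory $\bbc$ is left representable, define $e_{b,c}\in\bbc_t([b,c],b;c)$ by transporting $e_{b,c}\colon[b,c]\otimes b\to c$ across the representation, and then verify the full universal property \eqref{eq:multi0}, namely that $e_{b,c}\circ_1-\colon\bbc_x(\overline a;[b,c])\to\bbc_x(\overline a,b;c)$ is a bijection for every $x\in\crr_n$ and $\overline a$. The mechanism is the commuting square
\[ \xymatrix@C3pc{
\bbc_t(m_x\overline a;[b,c]) \ar[r]^-{-\circ_1\theta_x(\overline a)} \ar[d]_{e_{b,c}\circ_1-} & \bbc_x(\overline a;[b,c]) \ar[d]^{e_{b,c}\circ_1-} \\
\bbc_t(m_x\overline a,b;c) \ar[r]_-{-\circ_1\theta_x(\overline a)} & \bbc_x(\overline a,b;c)
} \]
which commutes by associativity of substitution (substituting $\theta_x(\overline a)$ in the first slot commutes with post-substituting $e_{b,c}$). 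The horizontal maps are bijections by weak and by left representability respectively, and the left-hand vertical is a bijection by closedness of $\cc$ together with the base-case identification of the previous paragraph; it follows that the right-hand vertical is a bijection, as required.

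The main obstacle, and the only genuine computation, is that base case: confirming that the unary map $e_{b,c}\circ_1-\colon\bbc_t(d;[b,c])\to\bbc_t(d,b;c)$ really implements the tensor–hom bijection $\cc(d,[b,c])\cong\cc(d\otimes b,c)$ after the representable rewriting. This is a Yoneda-and-substitution bookkeeping exercise identical in spirit to the square appearing in the proof of Proposition~\ref{prop:closedleftrep}; once it is in hand, the square above propagates the conclusion from $x=t_1$ to arbitrary $x$, and the matching of internal homs makes the restriction of the $2$-equivalence of Theorem~\ref{thm:leftrep} immediate.
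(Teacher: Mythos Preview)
Your proposal is correct and follows essentially the same approach as the paper: reduce to the object-level claim that closedness of $\bbc$ matches closedness of $\cc$, identify tight binary multimaps $e_{b,c}\in\bbc_t([b,c],b;c)$ with morphisms $[b,c]\otimes b\to c$, and then use exactly the commuting square you wrote (which is the paper's square transposed) together with left representability to pass between the unary tight case and arbitrary $x$. The paper compresses your two directions into a single ``iff'' reading of that square, and spends less ink on the base-case bookkeeping you flag, but the argument is the same.
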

\begin{proof}
We must show that a left representable skew multicategory
$\bbc$ is closed if and only if the corresponding skew monoidal
category $\cc$ is so.  A tight multimap $e_{b,c} \in
\bbc_{t}([b,c],b;c)$ is precisely a morphism $e_{b,c}\colon [b,c] \otimes b \to c \in \cc$.  Expressed in terms of $\bbc$ the family $\{e_{b,c};b,c \in \bbc\}$ exhibits the skew monoidal $\cc$ as closed just when the induced function
\[ e_{b,c} \circ_{1} -\colon\bbc_{t}(a,[b,c]) \to \bbc_{t}(a,b;c) \]
is a bijection for all $a$.  Now this is certainly required for $\bbc$ to be a closed skew multicategory but the full condition asks that the bottom row below
\begin{equation*}
\xymatrix{
\bbc_{t}(m_{x}\overline{a};[b,c]) \ar[d]_{- \circ_{1} \theta_{x}(\overline{a})} \ar[rr]^{e_{b,c} \circ_{1} -} && \mathbb C_{t}(m_{x}(\overline{a}),b;c)
 \ar[d]^{- \circ_{1} \theta_{x}(\overline{a})} \\
\bbc_{x}(\overline{a};[b,c]) \ar[rr]^{e_{b,c} \circ_{1} -} && \mathbb C_{x}(\overline{a},b;c)
}
\end{equation*}
is a bijection for all $x ,\overline{a}$.  By left representability,
however, the universal multimap $\theta_{x}(\overline{a}) \in
\bbc_{x}(\overline{a};m_{x}(\overline{a}))$ induces bijections in the
columns; since the diagram commutes the top row is a bijection just
when the bottom is one, thus \bbc is closed just when \cc is  so.
\end{proof}

A natural class of skew multicategories  consists of those
for which
each $$j_{\overline{a},b}\colon\bbc_{t}(\overline{a},b) \to
\bbc_{\myl}(\overline{a},b)$$ is an inclusion -- for, as noted in
Proposition~\ref{prop:subset}, these are just multicategories equipped
with a subcollection of tight morphisms closed under 
substitution  in the first variable.  By an argument similar to
Theorem~\ref{thm:leftnormal}, the left representable amongst these
correspond to skew monoidal categories for
which $$\lambda_{a_{1}}a_{2}\ldots a_{n}\colon ia_{1}\ldots a_{n} \to a_{1} \ldots a_{n}$$ is an \emph{epimorphism} for all non-empty tuples $\overline{a}$, wherein the above morphism involves left bracketings.  In the closed skew monoidal case this simplifies since each $- \otimes a_{i}$ preserves epimorphisms.  We record the result in that setting, which refines Theorem~\ref{thm:leftrepclosed}.

\begin{theorem}
There is a 2-equivalence between closed skew monoidal categories whose
left unit maps $\lambda\colon i \otimes a \to a$ are epimorphisms and those skew multicategories $\bbc$ with each $j_{\overline{a},b}\colon\bbc_{t}(\overline{a},b) \to \bbc_{\myl}(\overline{a},b)$ is an inclusion.
\end{theorem}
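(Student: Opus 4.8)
The plan is to obtain the claimed 2-equivalence by \emph{restricting} the 2-equivalence of Theorem~\ref{thm:leftrepclosed}, which already identifies closed skew monoidal categories with left representable closed skew multicategories. Since on each side I am cutting out a full sub-2-category, it suffices to verify the object-level matching: that the correspondence of Theorem~\ref{thm:leftrepclosed} carries closed skew monoidal categories with epimorphic left unit maps to (left representable closed) skew multicategories with each $j_{\overline{a},b}$ an inclusion, and conversely. Once the objects are matched, fullness of both sub-2-categories makes the restriction of the equivalence automatic on $1$-cells and $2$-cells.

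First I would recall from Section~\ref{sect:From} that, under the equivalence, a closed skew monoidal $\cc$ produces the skew multicategory $\bbc$ whose comparison map
\[ j_{\overline{a},b}\colon \bbc_{\myt}(a_1,\ldots,a_n;b)\to \bbc_{\myl}(a_1,\ldots,a_n;b) \]
is precomposition with $\lambda_{a_1}a_2\ldots a_n\colon ia_1\ldots a_n\to a_1\ldots a_n$. Because a morphism $e$ is an epimorphism exactly when $-\circ e$ is injective for every codomain, the maps $j_{\overline{a},b}$ are inclusions for all $b$ precisely when $\lambda_{a_1}a_2\ldots a_n$ is an epimorphism. Thus ``$j$ is an inclusion'' translates, for all non-empty $\overline{a}$, into ``$\lambda_{a_1}a_2\ldots a_n$ is an epimorphism'', exactly the condition flagged in the discussion preceding Theorem~\ref{thm:leftnormal}.

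The heart of the argument, and the only place closedness enters, is reducing this whole family of conditions to epimorphy of the single map $\lambda\colon i\otimes a\to a$. The forward direction is just the case $n=1$. For the converse I would induct on $n$, using the left-bracketing convention to write $\lambda_{a_1}a_2\ldots a_{n+1}=(\lambda_{a_1}a_2\ldots a_n)\otimes a_{n+1}$, so that the $(n{+}1)$-ary comparison is the image of the $n$-ary one under $-\otimes a_{n+1}$. In a closed skew monoidal category each functor $-\otimes a_{n+1}$ is left adjoint to $[a_{n+1},-]$ and hence preserves epimorphisms; so if $\lambda_{a_1}a_2\ldots a_n$ is an epimorphism then so is $\lambda_{a_1}a_2\ldots a_{n+1}$, completing the induction with base case $\lambda_{a_1}\colon ia_1\to a_1$.

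Putting these together, the equivalence of Theorem~\ref{thm:leftrepclosed} sends a closed skew monoidal category with epimorphic left unit maps to a left representable closed skew multicategory with $j$ an inclusion, and conversely; both classes are stable under the relevant isomorphisms and determine full sub-2-categories, so the equivalence restricts to the asserted one. I do not expect a genuine obstacle here: the substantive input (closedness forcing each $-\otimes a_i$ to be a left adjoint, hence epimorphism-preserving) is already available, and the only point needing care is the bookkeeping that confirms $\lambda_{a_1}a_2\ldots a_{n+1}=(\lambda_{a_1}a_2\ldots a_n)\otimes a_{n+1}$ under the left-bracketing convention, after which the induction is immediate.
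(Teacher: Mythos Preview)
Your proposal is correct and follows essentially the same approach as the paper: the paper's argument is the paragraph immediately preceding the theorem, which (i) invokes an argument parallel to Theorem~\ref{thm:leftnormal} to identify ``each $j_{\overline{a},b}$ an inclusion'' with ``each $\lambda_{a_1}a_2\ldots a_n$ an epimorphism'', and (ii) uses closedness to reduce this family of conditions to epimorphy of $\lambda$ alone via the fact that each $-\otimes a_i$ preserves epimorphisms. You have supplied exactly these steps, with the induction on $n$ made explicit.
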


\subsection{Skew closed categories versus closed skew multicategories with unit}

A skew closed category \cite{skewclosed} consists of a
category $\cc$ equipped with a functor $[-,-]\colon \cc^{op} \times \cc \to \cc$ and object $i$ together with natural transformations
\[ \xymatrix @R0pc { 
[b,c] \ar[r]^-{L} &  [[a,b],[a,c]] \\
[i,a] \ar[r]^{I} & a \\
i \ar[r]^{J} & [a,a]
} \]
subject to five axioms \cite{skewclosed}.  If the components $I\colon
[i,a] \to a$ are invertible as well as the functions $\cc(J,1) \circ
[a,-]\colon\cc(a,b) \to \cc(i,[a,b])$, the skew closed category is
said to be \emph{closed}.\begin{footnote}{ The original definition of
    closed category \cite{Eilenberg1966Closed} involved an
    \emph{underlying functor} to $\Set$.   Above we refer to the modified definition of \cite{Laplaza1977Embedding}.}\end{footnote}

Between skew closed categories are \emph{closed functors}, which
involve morphisms $[Fa,Fb] \to F[a,b]$ and $i \to Fi$,  and {\em
  closed transformations}.  All together, these form a 2-category $\SkewC$.

In Theorem 5.1 of of \cite{Manzyuk2012Closed}, Manzyuk established a
correspondence between closed categories and closed multicategories
with unit.  The following theorem, which builds on work of
\cite{bourko-skew}, gives the skew version of Manzyuk's theorem.  Our
argument, which is rather different in character to Manzyuk's,
essentially treats the skew closed case as a special case of the skew
monoidal case and can easily be adapted to give an alternative proof of his result.

\begin{theorem}\label{thm:SkewC}
There is a 2-equivalence between the 2-category \SkewC and the full sub 2-category of $\RMult$ consisting of the closed skew multicategories with unit.
\end{theorem}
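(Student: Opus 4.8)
The plan is to construct the 2-equivalence directly, mirroring the dictionary of Section~\ref{sect:From} but with the tensor product replaced throughout by the internal hom. The starting observation is that in any closed skew multicategory with unit $\bbc$ the closedness bijections let one ``curry'' every multihom: iterating $e_{b,c}\circ_1-$ identifies
\[ \bbc_\myt(a_1,\ldots,a_n;b)\cong \cc\big(a_1,[a_2,[\ldots,[a_n,b]\ldots]]\big),\qquad \bbc_\myl(a_1,\ldots,a_n;b)\cong \cc\big(i,[a_1,[\ldots,[a_n,b]\ldots]]\big), \]
where $\cc$ is the underlying category (tight unary maps) and $i$ is the nullary map classifier. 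Composing closedness (for both $x=\myt$ and $x=\myl$) with the nullary classifier moreover yields a natural \emph{unit--insertion} bijection $\bbc_\myl(\overline{a};b)\cong\bbc_\myt(i,\overline{a};b)$; this is the closed-with-unit analogue of the loose-map classifier of Proposition~\ref{prop:looseclassifier}, and crucially it is available here \emph{without} assuming left representability. These identifications show that a closed skew multicategory with unit is determined by the data $(\cc,[-,-],i)$ together with its closed and unital structure --- precisely the ingredients of a skew closed category.

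From $\bbc$ to a skew closed category I would take $\cc$, the functor $[-,-]\colon\cc\op\x\cc\to\cc$ determined by closedness and Yoneda (as in the discussion following \eqref{eq:multi0}), and the unit $i$, and then define the three structural transformations. The map $L\colon[b,c]\to[[a,b],[a,c]]$ is the double transpose, under two applications of closedness, of the internal composite $e_{b,c}\circ_2 e_{a,b}\in\bbc_\myt([b,c],[a,b],a;c)$. The map $I\colon[i,a]\to a$ is $e_{i,a}\circ_2\theta_{\myl_0}$, which is tight because substituting the loose nullary classifier into the second argument of a tight map leaves it tight. Finally $J\colon i\to[a,a]$ is the element of $\bbc_\myt(i;[a,a])$ corresponding, under closedness and the unit--insertion bijection, to the image $j(1_a)\in\bbc_\myl(a;a)$ of the identity. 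One then checks naturality and the five skew closed axioms.

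In the opposite direction, given a skew closed category $(\cc,[-,-],i,L,I,J)$ I would turn the curried formulas above into \emph{definitions}: set $\bbc_\myt(a_1,\ldots,a_n;b)=\cc(a_1,[a_2,\ldots,[a_n,b]])$ and $\bbc_\myl(a_1,\ldots,a_n;b)=\cc(i,[a_1,\ldots,[a_n,b]])$, with $\bbc_\myl(;b)=\cc(i,b)$; take the identities to be identity morphisms; let the comparison $j$ be the map $\cc(J,1)\circ[a_1,-]$ from the definition of closedness; and build substitution out of $L$ together with the closed structure. The evaluation $e_{b,c}$ is then the identity of $[b,c]$ regarded inside $\bbc_\myt([b,c],b;c)=\cc([b,c],[b,c])$, so closedness holds by construction, and $i$ represents $\bbc_\myl(;-)=\cc(i,-)$, giving the nullary classifier. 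The content is to verify that this substitution is associative and unital, which is exactly where the five skew closed axioms get consumed.

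The final step is to promote both assignments to 2-functors and check they are mutually inverse. A morphism of skew multicategories between closed-with-unit objects induces comparison maps $[Fa,Fb]\to F[a,b]$ (from compatibility with $e$) and $i\to Fi$ (from the nullary classifier), and unwinding preservation of substitution and identities recovers precisely the closed-functor axioms; likewise 2-cells correspond to closed transformations. The round trips are the identity on the curried hom-sets, so only naturality remains. I expect the main obstacle to be the two axiom-matching verifications --- deriving the five skew closed axioms from associativity and unitality of substitution in one direction, and conversely --- since the skew, non-invertible nature of $L,I,J$ forces careful attention to left bracketing and to the direction of each comparison, exactly as in the conventions of Section~\ref{sect:From}. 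The saving grace is that every hom-set in sight is representable via $e$ and $\theta_{\myl_0}$, so each axiom reduces to an equality of specified morphisms in $\cc$ checkable by applying the relevant universal bijections; and Theorem~\ref{thm:leftrepclosed} together with Proposition~\ref{prop:closedleftrep} provides a consistency check on the subclass of skew closed categories whose internal homs admit left adjoints.
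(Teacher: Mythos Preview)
Your proposal is sound but takes a genuinely different route from the paper. You construct the correspondence directly, in the style of Manzyuk, by currying the multihoms down to unary ones and then matching the five skew closed axioms against associativity and unitality of substitution. The paper instead proceeds indirectly in three steps: (I) skew closed structures on $\cc$ are identified, via Street's skew promonoidal machinery and Day convolution, with certain right skew monoidal structures on $[\cc,\Set]$; (II) these are identified with certain normal lax $\crr$-algebra structures on $[\cc,\Set]$ by the dual of Theorem~7.8 of the companion paper \cite{Fsk2}; and (III) such lax $\crr$-algebra structures are identified, by restriction along powers of the Yoneda embedding, with closed skew multicategory structures with unit on $\cc$.

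Your approach is more elementary and self-contained, and makes the dictionary between the two structures completely explicit; the price is the direct axiom-matching you anticipate, which is tedious though not deep. The paper's route avoids unpacking the five axioms by embedding the problem into a skew \emph{monoidal} situation on the presheaf category, so that the companion paper's results and standard convolution facts do the work; the trade-off is reliance on external machinery (Day convolution, \cite{skewclosed}, \cite{Fsk2}). The paper itself notes that its argument ``is rather different in character to Manzyuk's''; yours is precisely the skew adaptation of Manzyuk's, and your unit--insertion bijection $\bbc_\myl(\overline{a};b)\cong\bbc_\myt(i,\overline{a};b)$ is the key observation that makes the direct approach go through without left representability.
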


\begin{proof}
We give only the core details of the proof.  Our argument will proceed
in three steps:
\begin{enumerate}[Step (I)]
\item skew closed structures on \cc correspond to certain right skew
  monoidal structures on $[\cc,\Set]$;
\item the right skew monoidal structures of (I) correspond to certain
  {\em lax} \crr-algebra structures;
\item the lax \crr-algebra structures of (II) correspond to closed
  skew multicategories with unit, whose underlying category is \cc.
\end{enumerate}
% First we identify skew closed structures on a category $\cc$ with certain right skew monoidal structures on $[\cc,\Set]$.  The dual of Theorem 7.8 of \cite{Fsk2} identifies these with certain \emph{lax} $\crr$-algebra structures on $[\cc,\Set]$.  Finally we show that such lax $\crr$-algebra structures correspond to closed skew multicategories with unit on $\cc$ -- that is, having underlying category $\cc$.

 Step (I)  is due to Street  \cite{skewclosed} and builds on work
of Day \cite{daythesis}.  A (left) skew \emph{promonoidal} structure
on $\cc$ is a left skew pseudomonoid in the monoidal bicategory of
profunctors: such involves structure functors $P\colon\cc^{op} \times
\cc^{op} \times \cc \to \Set$ and $J\colon \cc \to \Set$ plus three
coherence constraints satisfying five equations \cite{skewclosed}.  By
Proposition 22 of \cite{skewclosed} each skew closed structure on $\cc$ determines a skew promonoidal structure with $P(a,b,c)=\cc(a,[b,c])$ and $Ja=\cc(i,a)$.  By the same proposition skew closed structures on $\cc$ can be identified with promonoidal structures for which each $P(-,b,c)\colon\cc^{op} \to \Set$ and $J\colon\cc \to \Set$ are representable.  Now left skew promonoidal structures on \cc correspond to \emph{right} skew monoidal structures on $[\cc,\Set]$ whose tensor product is cocontinuous in each variable: the right skew monoidal structure on $[\cc,\Set]$ has a convolution tensor product given by the left Kan extension $m = lan_{y^{2}}P$
\begin{equation*}
\xymatrix{
[\cc,\Set]^{2} \ar[dr]^{m} \\
(\cc^{op})^{2} \ar[u]^{y^{2}} \ar[r]_{P} & [\cc,\Set] %& .
}
\end{equation*}
and unit $J$.
%In the other direction $P$ is obtained as the restriction of $m$ along $y^{2}$; that is $P(a,b,c)=m(ya,yb)(c)$.  $J$ becomes the unit.  
Putting this together we conclude that skew closed structures on $\cc$ amount to
\begin{enumerate}
\item
Right skew monoidal structures $([\cc,\Set],m,J)$ such that $m$ is
cocontinuous in each variable, such that $m(y-,yb)(c)\colon\cc^{op}
\to \Set$ is representable, and such that $J\colon\cc \to \Set$ is representable.
\end{enumerate}

By Theorem 7.8 of the companion paper \cite{Fsk2},  left skew monoidal structures on a category correspond to \LBC-algebras -- normal colax $\cl$-algebras whose substitution maps 
\begin{equation}\label{eq:sub}
\xymatrix{
m_{x_{n+1}}(\overline{a},b) \ar[rr]^{\Gamma_{\myt_2,1,x_n}} && m_{t_{2}}(m_{x_n}(\overline{a}),b)
}
\end{equation} 
are identities.  If these are merely isomorphisms rather than identities then, by an easy transport of structure argument, we can produce an isomorphic normal colax structure on $\cc$ satisfying the stricter condition.  Accordingly, left skew monoidal structures equally correspond to \emph{normal colax $\cl$-algebras with \eqref{eq:sub} invertible}.  The dual result is that 
%This variant of Theorem * is the more natural one to consider in the present context.  By its dual, r
right skew monoidal structures on $[\cc,\Set]$ correspond to normal \emph{lax} $\crr$-algebra structures on $[\cc,\Set]$ for which the substitution maps 
\begin{equation}\label{eq:sub2}
\xymatrix{
 m_{t_{2}}(m_{x_n}(\overline{a}),b) \ar[rr]^{\Gamma_{\myt_2,1,x_n}} && m_{x_{n+1}}(\overline{a},b) 
}
\end{equation} 
are invertible.\begin{footnote}{A lax $\crr$-algebra corresponds to a colax $\cl$-algebra structure on the opposite category -- accordingly its substitution maps point in the opposite direction.}\end{footnote}
The lax structure associated to $([\cc,\Set],m,J)$ has $m_{t_{2}}=m$ and $m_{l_{0}}=J$.  The natural isomorphisms $m_{x_{n+1}}(\overline{a},b) \cong  m_{t_{2}}(m_{x_n}(\overline{a}),b)$ inductively ensure that each $m_{x}\colon[\cc,\Set]^{n} \to [\cc,\Set]$ is cocontinuous in each variable if $m$ is.  Accordingly, right skew monoidal structures as per (1) correspond to
\begin{enumerate}
\item[(2)] Normal lax $\crr$-algebra structures on $[\cc,\Set]$ with $m_x$ cocontinuous in each variable for all $x$ and such that
\begin{enumerate}
\item The functors $m_{t_{2}}(y-,yb)(c)\colon\cc^{op} \to \Set$ %are representable for all $b,c$
 and $m_{l_{0}}\colon\cc \to \Set$ are representable.
\item The substitution maps ~\eqref{eq:sub2} are invertible.
\end{enumerate}
\end{enumerate}

Accordingly, it remains %Combining (1) and (2)
 to establish a correspondence between closed skew multicategories
 with unit on $\cc$ and structures as in (2) above.  In fact, the core
 of this correspondence holds for a general $\Cat$-operad $\ct$:
$\ct$-multicategory structures on $\cc$ correspond to
normal lax $\ct$-algebra structures on $[\cc,\Set]$ for which $m_x\colon[\cc,\Set]^n \to [\cc,\Set]$ is cocontinuous in each variable for each $x \in \ct_n$. 

 To  see  this, first recall from
Proposition~\ref{prop:categoryA} that for a $\ct$-multicategory
\bbc,  the collections of multimaps extend uniquely to functors 
\begin{equation}\label{eq:extension}
\bbc_{-}(-;-)\colon\ct_n \times (\cc^{n})^{op} \times \cc \to \Set .
\end{equation}
such that $\bbc_{e}(-;-) = \cc(-;-)$ and with respect to which substitution becomes natural in each variable.  This allows us to identify $\ct$-multicategory structures on $\cc$ with multicategories equipped with such extensions.
Given such a $\bbc$  the \emph{convolution} lax $\ct$-algebra structure on $[\cc,\Set]$ has $m_x$ given by the left Kan extension
\begin{equation*}
\xymatrix{
[\cc,\Set]^{n} \ar[dr]^{m_x} \\
(\cc^{op})^{n} \ar[u]^{y^{n}} \ar[r]_{\bbc_x} & [\cc,\Set] & .
}
\end{equation*}
Accordingly $m_x$ is cocontinuous in each variable and is given
by  
\begin{equation*}
m_x(A_1,\ldots,A_n) = \int^{a_{1},\ldots,a_{n} \in \cc}A_{1}a_{1} \times \ldots A_{n}a_{n} \times \bbc_x(a_{1},\ldots,a_{n};-) .
\end{equation*}
We sometimes abbreviate this by  $\int^{\overline{a}}\overline{A}\overline{a} \times \bbc_x(\overline{a};-)$.  Since $y^n$ is fully faithful we have $m_x(ya_1,\ldots,ya_n)\cong \bbc_x(a_{1},\ldots,a_n;-)$.

Now $\bbc_e = y\colon\cc^{op} \to [\cc,\Set]$; hence we may, and do,
set $m_e=1$ so as to obtain normality.  The components $m_f\colon m_x
\to m_y$ for $f\colon x \to y \in \ct_n$ satisfy the obvious formula.
The component of substitution $m_{x}(m_{x1},\ldots,m_{xn}) \to
m_{x(x_{1},\ldots,x_n)}$ at $(\overline{A_1},\ldots,\overline{A_n})$
is the composite
\begin{eqnarray*}
\int^{\overline{b}} \left(\int^{\overline{a_1}}\overline{A_1}\overline{a_{1}} 
 \times \bbc_{x_{1}}(\overline{a_{1}};b_{1}) \times \ldots \times \int^{\overline{a_n}}\overline{A_n}\overline{a_{n}} \times \bbc_{x_{1}}(\overline{a_{n}};b_{n}) \right) \times \bbc_x(\overline{b};-) \\
 \cong \int^{\overline{b},\overline{a_1},\ldots,\overline{a_n}}  \overline{A_1}\overline{a_{1}} 
\times \ldots \times \overline{A_n}\overline{a_{n}}  \times \bbc_{x_{1}}(\overline{a_{1}};b_{1}) \times \ldots \bbc_{x_{1}}(\overline{a_{n}};b_{n}) \times \bbc_x(\overline{b};-) \\
\longrightarrow 
\int^{\overline{a_1},\ldots,\overline{a_n}}  \overline{A_1}\overline{a_{1}} 
\times \ldots \times \overline{A_n}\overline{a_{n}}  \times \bbc_{x(x_1,\ldots,x_n)}(\overline{a_1},\ldots,\overline{a_n};-)
\end{eqnarray*}
whose second component is a coend of substitution maps.  On representables it returns, up to natural isomorphism, the substitution maps for $\bbc$.  The lax $\ct$-algebra axioms are easily verified: since they assert the equality of composite natural transformations between functors cocontinuous in each variable it is enough to check they hold at representables, where they amount to the axioms for a $\ct$-multicategory.  We omit the straightforward converse construction, which is obtained by restriction along powers of the Yoneda embedding.

Finally we specialise to $\ct=\crr$.  By the above analysis we have a correspondence between  $\crr$-multicategory
structure   on $\cc$ and normal lax $\crr$-algebra structure on $[\cc,\Set]$ for which each $m_x$ is cocontinuous in each
variable.  It remains,  then, to prove that it restricts to a correspondence between closed skew multicategories with unit and lax $\crr$-algebras  having the properties (2a) and (2b).

Let $\bbc$ be an $\crr$-multicategory.  We investigate what the conditions (2a) and (2b), interpreted at the associated lax $\crr$-algebra, mean for $\bbc$ itself.  The representability conditions of (2a) simply amount to the existence of a nullary map classifier and objects $[b,c]$ equipped with isomorphisms 
\begin{equation}\label{eq:preclosed}
\bbc_{e}(a,[b,c]) \cong \bbc_{t_{2}}(a,b;c)
\end{equation} natural in $a$.  Letting $e_{b,c} \in \bbc_{t_{2}}([b,c],b)$ denote the unit of the representation, we must show that the maps 
\begin{equation}\label{eq:closed}
\xymatrix{ 
\bbc_{x}(\overline{a},[b,c]) \ar[rr]^-{e_{b,c} \circ_{1} -} &&
\bbc_{x_{n+1}}(\overline{a},b;c) } 
\end{equation} are invertible -- that is, \bbc is closed -- if and only if the associated lax $\ct$-algebra satisfies (2b).

By cocontinuity of $m_x$ in each variable, the substitution maps of (2b) will be invertible in all components just when they are so at representables; that is, just when the map
\begin{equation*}
\xymatrix{
 m_{t_{2}}(m_{x_n}(y\overline{a}),yb) \ar[rr]^{\Gamma_{\myt_2,1,x_n}} && m_{x_{n+1}}(y\overline{a},yb) 
}
\end{equation*} 
is so for all $\overline{a} \in \cc^n $ and $b \in \cc$.  This
map is induced by multicategorical substitution: at $c \in \cc$ it has component
$$
\int^{d, f}\bbc_{x}(\overline{a};d) \times \bbc_{e}(b;
f)
\times \bbc_{t_{2}}(d, f;c) \to
\bbc_{x_{n+1}}(\overline{a},b; c)
$$
which, applying Yoneda to the domain, is isomorphic to
$$
\circ_{1}\colon \int^{d}\bbc_{x}(\overline{a};d) \times \bbc_{t_{2}}(d,b;c) \to \bbc_{x_{n+1}}(\overline{a},b;c)
$$
Accordingly we must show that these last maps are invertible just when
those in \eqref{eq:closed} are.  
Now in the commutative diagram below the left vertical map is a Yoneda isomorphism whilst invertibility of the top horizontal map follows from \eqref{eq:preclosed}. 
\[ \xymatrix{
{}\int^{d}\bbc_{x}(\overline{a};d) \times \bbc_{e}(d;[b,c])  \ar[rr]^{\int^{d}1 \times (e_{b,c} \circ_{1} -)} 
\ar[d]_{\circ_{1}} && {}\int^{d}\bbc_{x}(\overline{a};d) \times \bbc_{t_{2}}(d,b;c) \ar[d]^{\circ_{1}} \\
\bbc_x(\overline a;[b,c]) \ar[rr]^{e_{b,c} \circ_{1} -} && \bbc_x(\overline{a},b;c) } \]
Therefore the right vertical map is invertible if and only if the
bottom horizontal map is so.  
\end{proof}

%\bibliography{my}
\bibliographystyle{plain}

\end{document}